\newcommand{\R}{{\mathbb R}}
\newcommand{\sS}{{\mathbb S}}
\newcommand{\Z}{{\mathbb Z}}
\newcommand{\e}{\varepsilon}
\newcommand{\p}{\partial}
\newcommand{\ra}{\rightarrow}
\newcommand{\norm}[1]{\left\Arrowvert {#1}\right\Arrowvert}
\newcommand{\osc}{\operatornamewithlimits{osc}}
\newcommand{\tr}{\operatorname{tr}}
\newcommand{\Lip}{\operatorname{Lip}}
\theoremstyle{plain}
\newtheorem{theorem}{Theorem}[section]
\newtheorem{lemma}[theorem]{Lemma}
\theoremstyle{definition}
\theoremstyle{remark}
\newtheorem{remark}[theorem]{Remark}
\numberwithin{equation}{section}
\title[Higher Order Convergence Rates: Hamilton-Jacobi Equations]{Higher Order Convergence Rates in Theory of Homogenization III: viscous Hamilton-Jacobi Equations}
\author{Sunghan Kim}
\address{Department of Mathematical Sciences, Seoul National University, Seoul 08826, Korea}
\email{sunghan290@snu.ac.kr}
\author{Ki-Ahm Lee}
\address{Department of Mathematical Sciences, Seoul National University, Seoul 08826, Korea
\& Center for Mathematical Challenges, Korea Institute for Advanced Study, Seoul 02455, Korea}
\email{kiahm@snu.ac.kr}
\thanks {S. Kim has been supported by National Research Foundation of Korea (NRF) grant funded by the Korean government (NRF-2014-Fostering Core Leaders of the Future Basic Science Program). K.-A. Lee has been supported by NRF grant funded by the Korean government (MSIP) (No. 2017R1A2A2A05001376). K.-A. Lee also holds a joint appointment with the Research Institute of Mathematics of Seoul National University.}
\begin{document}

\maketitle
\begin{abstract} 
In this paper, we establish the higher order convergence rates in periodic homogenization of viscous Hamilton-Jacobi equations, which is convex and grows quadratically in the gradient variable. We observe that although the nonlinear structure governs the first order approximation, the nonlinear effect is absorbed as an external source term of a linear equation in the second and higher order approximation. Moreover, we find that the geometric shape of the initial data has to be chosen carefully according to the effective Hamiltonian, in order to achieve the higher order convergence rates. 
\end{abstract}

\tableofcontents


\section{Introduction}\label{section:intro}

This paper concerns the higher order convergence rates of the homogenization of viscous Hamilton-Jacobi equations. The model problem is of the form, 
\begin{equation}\label{eq:HJ}
\begin{dcases}
u_t^\e - \e \tr\left( A\left(\frac{x}{\e}\right) D^2 u^\e\right) + H \left( Du^\e, \frac{x}{\e} \right) = 0 & \text{in }\R^n\times(0,\infty),\\
u^\e = g & \text{on }\R^n\times\{t=0\}.
\end{dcases}
\end{equation}
Here the diffusion matrix $A$ is periodic and uniformly elliptic, and the Hamiltonian $H$ is periodic in the spatial variable while it is convex and grows quadratically in the gradient variable. The initial data $g$ will be chosen to have smooth solutions for the effective Hamilton-Jacobi equation. At the end of this paper, we shall extend the result to the fully nonlinear, viscous Hamilton-Jacobi equation in the form of 
\begin{equation}\label{eq:HJ-nl}
\begin{dcases}
u_t^\e + H \left( \e D^2 u^\e, Du^\e,\frac{x}{\e}\right) = 0 & \text{in }\R^n\times(0,\infty),\\
u^\e = g & \text{on }\R^n\times\{t=0\}.
\end{dcases}
\end{equation}

This paper is in the sequel of the authors' previous works \cite{KL1} and \cite{KL2}, where the higher order convergence rates were achieved in the periodic homogenization of fully nonlinear, uniformly elliptic and parabolic, second order PDEs. We found it interesting in the previous works that even if we begin with a nonlinear PDE at the first order approximation, we no longer encounter such a nonlinear structure in the second and the higher order approximations. Instead, we always obtain a linear PDE with an external source term, which can be interpreted as the nonlinear effect coming from the error that is left undetected in the previous step of the approximation. 

The previous papers were concerned of uniformly elliptic (or parabolic) PDEs that are nonlinear in the second order derivatives, where the nonlinear perturbation is still made in the same order of the linear structure. A key difference in the current paper is that we impose a nonlinear structure (in the gradient term) that has quadratic growth at the infinity, so that this nonlinearity cannot be attained by order 1 perturbations of a linear structure. We believe that the quadratic growth condition can be generalized to superlinear growth condition, only if the solution of the corresponding effective problem is smooth enough.

Another interesting fact we found in studying Hamilton-Jacobi equations is that the geometric shape of the initial data turns out to play an important role in achieving higher order convergence rates. In particular, what we observe in this paper is that the geometric shape of the initial data has to be selected according to the nonlinear structure of the effective Hamiltonian, which to the best of our knowledge has not yet been addressed in any existing literature. The main reason for this requirement is to ensure the solution of the effective problem to be sufficiently smooth such that one can proceed with the approximation as much as one desires. 

In this paper, we establish higher order convergence rates when the initial data is convex, while the Hamiltonian is convex. However, a natural question is if one can generalize one of these structure conditions, which seems to be an interesting yet challenging problem. We shall come back to this in the forthcoming paper.  

The periodic homogenization of (viscous) Hamilton-Jacobi equations is by now considered to be standard, and one may consult the classical materials \cite{LPV} and \cite{E} for a rigorous justification. For the notion of viscosity solutions and the standard theory in this framework we refer to \cite{CIL} and \cite{CC}. 

For the recent development in the rate of convergence in periodic homogenization of (viscous) Hamilton-Jacobi equations, we refer to \cite{CDI}, \cite{CCDG}, \cite{M}, \cite{MT}, and the references therein. Nevertheless, this is the first work on the higher order convergence rates in the regime of (viscous) Hamilton-Jacobi equations. For the higher order convergence rates for other type of equations, we refer to \cite{KL1}, \cite{KL2} and the references therein. 

The paper is organized as follows. In Section \ref{section:assump}, we introduce basic notation used throughout this paper, and list up the standing assumptions regarding the main problem \eqref{eq:HJ}. From Section \ref{section:prelim} to Section \ref{section:cor}, we are concerned with the homogenization problem of \eqref{eq:HJ}. In Section \ref{section:prelim}, we summarize some standard results on the cell problem and the effective Hamiltonian. In Section \ref{section:reg}, we establish the regularity theory of interior correctors in the slow variable. Based on this regularity theory, we construct the higher order interior correctors in Section \ref{section:cor} and prove Theorem \ref{theorem:cr}, which is the first main result. Finally in Section \ref{section:nl}, we generalize this result to the homogenization of \eqref{eq:HJ-nl}, and prove Theorem \ref{theorem:cr-nl}, which is the second main result.


\section{Basic Notation and Standing Assumption}\label{section:assump}

Throughout the paper, we set $n\geq 1$ to be the spatial dimension. The parameters $\lambda$, $\Lambda$, $\alpha$, $\alpha'$, $\beta$, $\beta'$, $K$, $L$, and $\bar\mu$ will be fixed positive constants, unless stated otherwise. By $\Z^n$ we denote the space of $n$-tuple of integers. By $\sS^n$ we denote the space of all symmetric $n\times n$ matrices. By $C^\infty(X; C^{k,\mu}(Y))$, we denote the space of functions $f=f(x,y)$ on $X\times Y$ such that $f(\cdot,y)\in C^\infty(X)$ for all $y\in Y$ and $\{D_x^k f(x,\cdot)\}_{x\in X}$ is uniformly bounded in $C^{k,\mu}(Y)$. 

From Section \ref{section:prelim} to Section \ref{section:cor}, we study the higher order convergence rates in homogenization of \eqref{eq:HJ}. Throughout these sections, we assume that the diffusion matrix $A$ satisfies the following, for any $y\in\R^n$. 
\begin{enumerate}[(i)]
\item $A$ is periodic:
\begin{equation}\label{eq:A-peri}
A(y+k) = A(y).
\end{equation}
\item $A$ is uniformly elliptic:
\begin{equation}\label{eq:A-ellip}
\lambda I \leq A(y) \leq \Lambda I.
\end{equation}
\item $A\in C^{0,1}(\R^n)$ and
\begin{equation}\label{eq:A-C01}
\norm{A}_{C^{0,1}(\R^n)} \leq K.
\end{equation}
\end{enumerate}
On the other hand, we shall assume that the Hamiltonian $H$ verifies the following, for any $(p,y)\in\R^n\times\R^n$. 
\begin{enumerate}[(i)]
\item $H$ is periodic in $y$:  
\begin{equation}\label{eq:H-peri}
H(p,y+k) = H(p,y),
\end{equation}
for any $k\in\Z^n$.
\item $H$ has quadratic growth in $p$: 
\begin{equation}\label{eq:H-quad}
\alpha |p|^2 - \alpha' \leq H(p,y) \leq \beta|p|^2 + \beta'.
\end{equation}
\item $H$ is convex in $p$: 
\begin{equation}\label{eq:H-convex}
H(tp + (1-t)q,y) \leq t H(p,y) + (1-t) H(q,y),
\end{equation}
for any $0\leq t\leq 1$ and any $q\in\R^n$. 
\item $H\in C^\infty(\R^n;C^{0,1}(\R^n))$ and  
\begin{equation}\label{eq:H-Ck-C01}
\norm{D_p^k H(p,\cdot)}_{C^{0,1}(\R^n)} \leq K \left(1+|p|^{(2-k)_+}\right),
\end{equation}
for any nonnegative integer $k$.
\end{enumerate}

The assumptions on the initial data $g$ will be given in the beginning of Section \ref{section:cor}, since we need to derive the effective Hamiltonian beforehand. On the other hand, the structure conditions for \eqref{eq:HJ-nl} will be given in the beginning of Section \ref{section:nl}. 


\section{Preliminaries}\label{section:prelim}

Let us begin with the well-known cell problem for our model equation \eqref{eq:HJ}, stated as below. This lemma is by now considered to be standard (for instance, see \cite{E1} and \cite{E2}), since the diffusion coefficient $A$ is uniformly elliptic and the Hamiltonian $H$ is convex. Nevertheless, we shall present a proof for the reader's convenience.

\begin{lemma}\label{lemma:cell} For each $p\in\R^n$, there exists a unique real number, $\gamma$, for which the following PDE,
\begin{equation}\label{eq:cell}
-\tr (A(y) D^2w) + H (D w + p,y) = \gamma \quad\text{in }\R^n,
\end{equation}
has a periodic viscosity solution $w\in C^{2,\mu}(\R^n)$ for any $0<\mu<1$. Moreover, we have
\begin{equation}\label{eq:cell-Linf}
\alpha |p|^2 - \alpha' \leq \gamma \leq \beta |p|^2 + \beta'.
\end{equation}
Furthermore, a periodic solution $w$ of \eqref{eq:cell} is unique up to an additive constant, and satisfies
\begin{equation}\label{eq:cell-C2a}
(1+ |p|) \left(\norm{ w - w(0) }_{L^\infty(\R^n)} + \norm{ Dw}_{L^\infty(\R^n)} \right) + \norm{D^2 w}_{C^\mu(\R^n)} \leq C( 1 + |p|^2),
\end{equation}
where $C>0$ depends only on $n$, $\lambda$, $\Lambda$, $\alpha$, $\alpha'$, $\beta$, $\beta'$, $K$ and $\mu$. 
\end{lemma}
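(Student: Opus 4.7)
The plan is to solve the cell problem via the vanishing discount method. For each $\delta > 0$, I would first consider the periodic problem
\[
\delta v^\delta - \tr(A(y) D^2 v^\delta) + H(Dv^\delta + p, y) = 0 \quad \text{in } \R^n,
\]
whose well-posedness in the classical $C^{2,\mu}$ class follows from Perron's method combined with the comparison principle (which uses the uniform ellipticity of $A$ and the continuity of $H$) and a standard elliptic bootstrap. The constants $c^+ = -(\alpha|p|^2 - \alpha')/\delta$ and $c^- = -(\beta|p|^2 + \beta')/\delta$ serve respectively as a super- and a subsolution by \eqref{eq:H-quad}, so comparison yields $\alpha|p|^2 - \alpha' \leq -\delta v^\delta \leq \beta|p|^2 + \beta'$ uniformly in $\delta$, which will produce \eqref{eq:cell-Linf} in the limit.

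The key step is the uniform Lipschitz estimate $\|Dv^\delta\|_{L^\infty(\R^n)} \leq C(1+|p|)$, which I would obtain by a Bernstein-type argument applied to $\phi = |Dv^\delta + p|^2$. After a smoothing of $A$ and $H(q,\cdot)$ in $y$ to legitimize one $y$-differentiation, differentiating the equation in $y_k$, multiplying by $\partial_k v^\delta + p_k$, and summing in $k$ yields, at an interior maximum of $\phi$ (where $D\phi = 0$ and $A_{ij}\phi_{ij}\leq 0$), an inequality of the form $|D^2 v^\delta|^2 \leq CM^2(1+M)$ with $M := |Dv^\delta + p|$ at that point, after invoking \eqref{eq:A-C01}--\eqref{eq:H-Ck-C01} and Young's inequality. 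On the other hand, the cell equation itself at the same point, combined with \eqref{eq:H-quad} and the uniform bound on $-\delta v^\delta$, gives the coercive lower bound $|D^2 v^\delta| \geq cM^2 - C(1+|p|^2)$. Closing these two competing estimates forces $M \leq C(1+|p|)$, independent of $\delta$.

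Once the Lipschitz bound is in hand, I would view the equation as $-\tr(A D^2 v^\delta) = f$ with $\|f\|_{L^\infty(\R^n)} \leq C(1+|p|^2)$; the Krylov--Safonov and $W^{2,q}$ estimates upgrade $v^\delta$ to $C^{1,\mu}$ uniformly in $\delta$, after which $H(Dv^\delta + p,\cdot) \in C^{0,\mu}$ with norm $\leq C(1+|p|^2)$ by \eqref{eq:H-Ck-C01} and Schauder delivers the uniform $C^{2,\mu}$ bound matching \eqref{eq:cell-C2a} for $w^\delta := v^\delta - v^\delta(0)$. Arzel\`a--Ascoli then extracts a subsequence $\delta_k \downarrow 0$ along which $w^{\delta_k} \to w$ in $C^{2,\mu'}$ for any $\mu' < \mu$ and $-\delta_k v^{\delta_k}(0) \to \gamma$, producing a periodic classical solution of \eqref{eq:cell} with the desired bounds.

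Uniqueness is then handled via the linearization enabled by the convexity \eqref{eq:H-convex}. For two periodic solutions $(w_1,\gamma_1)$ and $(w_2,\gamma_2)$, convexity provides $H(Dw_1 + p, y) \geq H(Dw_2 + p, y) + D_pH(Dw_2 + p, y) \cdot D(w_1 - w_2)$, and subtracting the two cell equations shows that $u := w_1 - w_2$ satisfies the linear differential inequality $-\tr(A D^2 u) + D_pH(Dw_2 + p, y) \cdot Du \leq \gamma_1 - \gamma_2$ on $\R^n$; swapping $w_1$ and $w_2$ gives the companion reverse inequality. Evaluating at the interior maximum and minimum of $u$ on the torus and applying the strong maximum principle for linear uniformly elliptic operators forces $\gamma_1 = \gamma_2$ and $u \equiv \text{const}$. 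The main obstacle throughout is clearly the Bernstein Lipschitz estimate, where one must simultaneously exploit the convexity of $H$ (through the coercive lower bound on $|D^2 v^\delta|$) and the uniform ellipticity of $A$ to close the competition between these two inequalities with the correct linear scaling in $|p|$, rather than the weaker sublinear scaling one would naively extract from the $H$-growth alone.
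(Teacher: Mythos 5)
Your proposal is correct and follows essentially the same route as the paper: the vanishing-discount (penalized) approximation with the constant barriers giving $-\beta|p|^2-\beta'\le \delta w^\delta\le -\alpha|p|^2+\alpha'$, a $\delta$-uniform Bernstein gradient bound, an elliptic bootstrap (Krylov--Safonov/$W^{2,q}$ then Schauder) to $C^{2,\mu}$, Arzel\`a--Ascoli for the limit, and a linearization-plus-maximum-principle argument for uniqueness of both $\gamma$ and $w$. The one place where you add substance is the Bernstein step: the paper merely cites Barles' weak Bernstein method, whereas you correctly spell out the competition between the upper bound $|D^2 v^\delta|^2\lesssim M^2(1+M)$ coming from differentiating the equation and the coercive lower bound $|D^2 v^\delta|\gtrsim M^2-C(1+|p|^2)$ coming from the quadratic growth \eqref{eq:H-quad} together with the $L^\infty$ bound on $\delta v^\delta$; your uniqueness argument invokes convexity \eqref{eq:H-convex} where the paper's mean-value linearization only needs differentiability of $H$ in $p$, but both are sound.
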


\begin{proof} Throughout the proof, $C$ will denote a positive, generic constant that depends at most on $n$, $\lambda$, $\Lambda$, $\alpha$, $\alpha'$, $\beta$, $\beta'$, $K$ and $\mu$, unless stated otherwise. Moreover, we shall fix $0<\mu<1$. 

Let $p\in\R^n$ be given. We know {\it a priori} that periodic viscosity solutions of \eqref{eq:cell}, if any, are unique up to an additive constant. Suppose that $w'$ is another periodic viscosity solution of \eqref{eq:cell}. Then $v = w-w'$ satisfies the following linearized equation,
\begin{equation*}
- \tr(A(y) D^2 v) + B(y) \cdot Dv = 0 \quad \text{in }\R^n,
\end{equation*}
where $B(y) = \int_0^1 D_p H( t D_y w + (1-t) D_y w' + p,y) dt$. Now that $v$ is bounded, we deduce from the Liouville theorem that $v$ is a constant function on $\R^n$. 

Henceforth, we prove the existence of a unique real number, $\gamma$, such that the cell problem \eqref{eq:cell} admits a periodic viscosity solution. The existence is proved by considering the following approximation problem,
\begin{equation}\label{eq:wd-pde}
- \tr(A(y) D^2 w^\delta) + H(D w^\delta + p, y) + \delta w^\delta = 0 \quad\text{in }\R^n,
\end{equation} 
for each $\delta>0$. Due to \eqref{eq:H-quad}, we know that $-\delta (\alpha |p|^2 - \alpha')$ and $-\delta (\beta|p|^2+\beta')$ are a supersolution and, respectively, a subsolution of \eqref{eq:wd-pde}. Thus, the comparison principle yields a unique viscosity solution, $w^\delta$, of \eqref{eq:wd-pde}, satisfying 
\begin{equation}\label{eq:dwd-Linf}
-\beta |p|^2 - \beta' \leq \delta w^\delta  \leq -\alpha |p|^2 + \alpha',
\end{equation}
on $\R^n$. The uniqueness of $w^\delta$ implies its periodicity, that is, $w^\delta (y+k) = w^\delta(y)$ for all $y\in\R^n$ and all $k\in\Z^n$. 

Let us remark here that  $w^\delta\in C^{0,1}(\R^n)$ and 
\begin{equation}\label{eq:wd-osc}
\norm{D w^\delta}_{L^\infty(\R^n)}  \leq C(1 + |p|),
\end{equation}
where $C>0$ depends only on $n$, $\alpha$, $\alpha'$, $\beta$ and $\beta'$. Note that the uniform Lipschitz estimate \eqref{eq:wd-osc} has nothing to do with the periodicity of $w^\delta$. In fact, one may use the weak Bernstein method \cite{B} to verify this uniform regularity, due to the structure conditions \eqref{eq:A-ellip}, \eqref{eq:A-C01}, \eqref{eq:H-convex} and \eqref{eq:H-Ck-C01}.

Now that $w^\delta$ is periodic, we may deduce from the interior gradient estimate for viscous Hamilton-Jacobi equations that Hence, by periodicity, $\osc_{\R^n} w^\delta \leq C(1+|p|)$, which yields that $w^\delta - w^\delta (0) \in C^{0,1}(\R^n)$ and 
\begin{equation}\label{eq:wtd-C01}
\norm{w^\delta - w^\delta(0)}_{L^\infty(\R^n)} + \norm{Dw^\delta}_{L^\infty(\R^n)} \leq C(1+|p|),
\end{equation}
where $C>0$ depend only on $n$, $\alpha$, $\alpha'$, $\beta$, $\beta'$ and $K$. Here $K$ is the constant appearing in the regularity assumption \eqref{eq:H-Ck-C01}. 

Due to \eqref{eq:H-quad}, \eqref{eq:dwd-Linf} and \eqref{eq:wd-osc}, we know that 
\begin{equation*}
\norm{ H(D w^\delta + p,\cdot) + \delta w^\delta }_{L^\infty(\R^n)} \leq C(1+ |p|^2),
\end{equation*}
where $C>0$ depends only on $n$, $\alpha$, $\alpha'$, $\beta$, $\beta'$ and $K$. Considering the second and the third terms on the left hand side of \eqref{eq:wd-pde} as an external force, we may apply the interior $C^{2,\mu}$ estimates and use the periodicity of $w^\delta$ to derive that $w^\delta - w^\delta(0) \in C^{1,\mu}(\R^n)$ and 
\begin{equation}\label{eq:wtd-C1a}
(1 + |p|) \left( \norm{w^\delta - w^\delta(0)}_{L^\infty(\R^n)} + \norm{Dw^\delta}_{L^\infty(\R^n)} \right)+ \left[D w^\delta\right]_{C^\mu(\R^n)} \leq C(1+|p|^2).
\end{equation}

Now the $C^{1,\mu}$ regularity of $w^\delta$ yields that 
\begin{equation*}
\norm{ H(D w^\delta + p,\cdot) + \delta w^\delta }_{C^\mu (\R^n)} \leq C(1+ |p|^2).
\end{equation*}
Hence, it follows from the interior $C^{2,\mu}$ estimates and the periodicity of $w^\delta$ that $w^\delta - w^\delta(0) \in C^{2,\mu}(\R^n)$ and 
\begin{equation}\label{eq:wtd-C1a}
(1 + |p|) \left( \norm{w^\delta - w^\delta(0)}_{L^\infty(\R^n)} + \norm{Dw^\delta}_{L^\infty(\R^n)} \right)+ \norm{D^2 w^\delta}_{C^\mu(\R^n)} \leq C(1+|p|^2).
\end{equation}

Due to the compactness of both of the sequences $\{w^\delta - w^\delta(0)\}_{\delta>0}$ and $\{-\delta w^\delta \}_{\delta>0}$ in $C^{2,\mu}(\R^n)$, we know that $w^\delta - w^\delta(0) \ra w$ and $-\delta w^\delta \ra \gamma$  in $C^{2,\mu'} (\R^n)$, for any $0<\mu'<\mu$, for some $w\in C^{2,\mu}(\R^n)$ and some $\gamma\in\R$, along a subsequence. Now that viscosity solutions are stable under the uniform convergence, we know that $w$ is a viscosity solution of \eqref{eq:cell} with the limit $\gamma$ on the right hand side. This proves the existence part of Lemma \ref{lemma:cell}.

To investigate the uniqueness of $\gamma$, we suppose towards a contradiction that there is another real number $\gamma'$, corresponding to the same $p$, such that \eqref{eq:cell} has a periodic viscosity solution, say $w'$. Without losing any generality, let us assume $\gamma>\gamma'$. Then it is easy to see that $w'$ is a strict subsolution of \eqref{eq:cell}. However, due to the periodicity of $w'-w$, $w'-w$ attains a local maximum at some point, whence we arrive at a contradiction. Thus, $\gamma$ must be unique. 

The inequality \eqref{eq:cell-Linf} follows immediately from the inequality \eqref{eq:dwd-Linf} and the fact that $-\delta w^\delta \ra \gamma$ uniformly in $\R^n$. To see that the estimate \eqref{eq:cell-C2a} holds, we first observe from the convergence of $w^\delta - w^\delta(0) \ra w$ in $C^{2,\mu'}(\R^n)$, for any $0<\mu'<\mu$, and the estimate \eqref{eq:wtd-C1a} that $w\in C^{2,\mu}(\R^n)$ and satisfies \eqref{eq:cell-C2a}. Note that we used $w(0) = 0$, which follows from the construction of $w$. Now if $w'$ is another periodic viscosity solution of \eqref{eq:w-pde}, then due to the uniqueness that we have shown in the beginning of this proof, we have $w' - w'(0) = w$. Therefore, $w'$ satisfies \eqref{eq:cell-C2a}, which completes the proof of this lemma.
\end{proof}

Due to the uniqueness of $\gamma$ in Lemma \ref{lemma:cell}, we may define a functional $\bar{H}:\R^n\ra\R$ in such a way that for each $p\in\R^n$, $\bar{H}(p)$ is the unique real number for which the following PDE,
\begin{equation}\label{eq:w-pde}
- \tr(A(y) D^2 w) + H( Dw + p, y ) = \bar{H}(p)\quad\text{in }\R^n,
\end{equation}
has a periodic solution in $C^{2,\mu}(\R^n)$ (for any $0<\mu<1$). Moreover, the second part of Lemma \ref{lemma:cell} yields a functional $w:\R^n\times\R^n\ra\R$ such that for each $p\in\R^n$, $w(p,\cdot)\in C^{2,\mu}(\R^n)$ (for any $0<\mu<1$) is the unique periodic viscosity solution of \eqref{eq:w-pde} that is normalized so as to satisfy
\begin{equation}\label{eq:w-0}
w(p,0) = 0.
\end{equation}

Let us list up some basic properties of $\bar{H}$ that were already found in \cite{E}. We provide the proof for the sake of completeness. 

\begin{lemma}\label{lemma:Hb} $\bar{H}$ satisfies the following properties.
\begin{enumerate}[(i)]
\item $\bar{H}$ has the same quadratic growth as that of $H$: 
\begin{equation}\label{eq:Hb-quad}
\alpha|p|^2 - \alpha'  \leq \bar{H}(p) \leq \beta|p|^2 + \beta',
\end{equation} 
for any $p\in\R^n$.
\item $\bar{H}$ is also convex:
\begin{equation}\label{eq:Hb-convex}
\bar{H} (tp + (1-t)q) \leq t \bar{H}(p) + (1-t) \bar{H}(q),
\end{equation}
for any $0\leq t\leq 1$, and any $p,q\in\R^n$. 
\item $\bar{H} \in C_{loc}^{0,1}(\R^n)$ and 
\begin{equation}\label{eq:Hb-C01}
| \bar{H}(p) - \bar{H}(q) | \leq C(1+|p| + |q|)|p-q|,
\end{equation}
where $C>0$ depends only on $n$, $\lambda$, $\Lambda$, $\alpha$, $\alpha'$, $\beta$, $\beta'$ and $K$. 
\end{enumerate}
\end{lemma}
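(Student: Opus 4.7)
The plan is to treat the three assertions in order, leveraging the correctors $w(p,\cdot) \in C^{2,\mu}(\R^n)$ supplied by Lemma \ref{lemma:cell}, so that every step can be carried out classically rather than through viscosity comparison. Item (i) is immediate: $\bar H(p)$ is, by definition, the unique $\gamma$ appearing in \eqref{eq:cell-Linf}, so \eqref{eq:Hb-quad} is just a restatement of that inequality.

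For convexity I would fix $p, q \in \R^n$ and $t \in [0,1]$, set $r = tp + (1-t)q$, and form the convex combination $w_t := t\, w(p, \cdot) + (1-t)\, w(q,\cdot)$. Because the trace term is linear in $D^2 w$, $H(\cdot, y)$ is convex by \eqref{eq:H-convex}, and $Dw_t + r = t(Dw(p,\cdot) + p) + (1-t)(Dw(q,\cdot) + q)$, one gets
\begin{equation*}
  -\tr(A D^2 w_t) + H(Dw_t + r, y) \leq t\bar H(p) + (1-t)\bar H(q) =: \gamma'
\end{equation*}
pointwise on $\R^n$, so $w_t$ is a periodic classical subsolution of the cell problem at momentum $r$ with right-hand side $\gamma'$. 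Evaluating this inequality and the corrector equation for $w(r,\cdot)$ at a point $y_0$ where the periodic function $w_t - w(r,\cdot)$ is maximized, where $Dw_t(y_0) = Dw(r, y_0)$ and $D^2(w_t - w(r,\cdot))(y_0) \leq 0$, the Hamiltonian terms cancel while the trace term contributes with the correct sign (using $A \geq \lambda I$), yielding $\bar H(r) \leq \gamma'$ and hence \eqref{eq:Hb-convex}.

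For the local Lipschitz estimate I would subtract the cell equations for $w(p,\cdot)$ and $w(q,\cdot)$ and linearize the Hamiltonian along the segment joining the two gradient arguments. Setting
\begin{equation*}
  B(y) := \int_0^1 D_p H\bigl( s(Dw(p,y) + p) + (1-s)(Dw(q,y) + q),\, y \bigr)\, ds,
\end{equation*}
the difference $v := w(p, \cdot) - w(q, \cdot)$ satisfies the linear equation
\begin{equation*}
  -\tr(A D^2 v) + B(y) \cdot Dv = \bar H(p) - \bar H(q) - B(y) \cdot (p-q) \quad \text{in } \R^n.
\end{equation*}
Evaluating at a periodic maximum and minimum of $v$, where $Dv = 0$ and $-\tr(A D^2 v)$ has the appropriate sign, produces the two-sided sandwich
\begin{equation*}
  B(y_0) \cdot (p-q) \leq \bar H(p) - \bar H(q) \leq B(y_1) \cdot (p-q),
\end{equation*}
whence $|\bar H(p) - \bar H(q)| \leq \sup_{\R^n}|B|\cdot |p-q|$. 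The growth condition \eqref{eq:H-Ck-C01} at $k=1$ combined with the uniform bound $\norm{Dw(p,\cdot)}_{L^\infty(\R^n)} \leq C(1+|p|)$ from \eqref{eq:cell-C2a} then gives $\sup_{\R^n}|B| \leq C(1 + |p| + |q|)$, yielding \eqref{eq:Hb-C01}.

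The three arguments are essentially routine consequences of Lemma \ref{lemma:cell}; the only mildly delicate point is tracking the correct dependence on $|p|$ and $|q|$ in (iii), which relies on coupling the linear growth of $D_p H$ with the sharp Lipschitz bound on the correctors. Note that the $C^{2,\mu}$ regularity of $w(p,\cdot)$ turns the maximum-principle argument in (ii) into a pointwise second-derivative test, bypassing any need for strict viscosity comparison.
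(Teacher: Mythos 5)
Your proposal is correct; all three assertions are established, and the constant tracking in (iii) is sound. Item (i) is handled exactly as in the paper (it is a restatement of \eqref{eq:cell-Linf}). For (ii) and (iii), however, you take a genuinely more classical route than the paper.

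For (ii), the paper argues by contradiction: if convexity failed, the convex combination $\tilde w_r = tw_p + (1-t)w_q$ would be a \emph{strict} viscosity subsolution of the cell problem at $r$, and then viscosity comparison forbids $\tilde w_r - w_r$ from attaining a local maximum, contradicting periodicity. You instead show directly that $w_t$ is a (non-strict) classical subsolution, evaluate both the subsolution inequality and the equation for $w(r,\cdot)$ at a maximum of $w_t - w(r,\cdot)$, and read off $\bar H(r) \leq t\bar H(p) + (1-t)\bar H(q)$ from cancellation of the Hamiltonian terms and the sign of the trace term. The two arguments rest on the same construction, but yours is direct and relies only on the $C^{2,\mu}$ regularity already stated in Lemma \ref{lemma:cell}, rather than on viscosity comparison. (A minor imprecision: you cite $A \geq \lambda I$, but only positive semi-definiteness $A \geq 0$ is actually used there.)

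For (iii), the paper re-enters the $\delta$-penalized problem $w_p^\delta$ from the \emph{proof} of Lemma \ref{lemma:cell}, shows by viscosity comparison that $\delta w_p^\delta - \delta w_q^\delta \leq C(1+|p|+|q|)|p-q|$, and passes to the limit $\delta \to 0$. You bypass the penalization entirely: subtract the two cell problems, linearize $H$ in $p$ along the segment to produce the drift coefficient $B$, and evaluate the resulting linear equation for $v = w_p - w_q$ at a periodic maximum and minimum, where the classical pointwise maximum principle gives the two-sided bound $B(y_0)\cdot(p-q) \leq \bar H(p) - \bar H(q) \leq B(y_1)\cdot(p-q)$. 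Since $v \in C^{2,\mu}$ by Lemma \ref{lemma:cell}, this is legitimate, and the bound on $\sup|B|$ via \eqref{eq:H-Ck-C01} at $k=1$ together with $\|Dw(p,\cdot)\|_{L^\infty} \leq C(1+|p|)$ (extracted from \eqref{eq:cell-C2a}) yields \eqref{eq:Hb-C01} with the same dependence. Your version is cleaner in that it treats Lemma \ref{lemma:cell} as a black box and works purely with the limiting correctors, whereas the paper's argument depends on internal objects of that lemma's proof.
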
 

\begin{proof} Notice that \eqref{eq:Hb-quad} follows immediately from \eqref{eq:cell-Linf}. Thus, we only prove ($\bar{H2}$) and \eqref{eq:Hb-C01}. 

For the notational convenience, let us write $w_p(y)= w(p,y)$. To prove \eqref{eq:Hb-convex}, we assume to the contrary that there are some $p,q\in\R^n$ and $0<t<1$ such that \begin{equation}\label{eq:Hb-false}
t\bar{H}(p) + (1-t )\bar{H}(q) < \bar{H}(tp + (1-t)q).
\end{equation}
For the notational convenience, let us write $r = tp + (1-t)q$ and $\tilde{w}_r = tw_p + (1-t)w_q$. Then due to \eqref{eq:Hb-false} and \eqref{eq:H-convex}, one can easily deduce that $\tilde{w}_r$ is a periodic viscosity solution of 
\begin{equation}\label{eq:wtr-pde}
- \tr(A(y) D^2 \tilde{w}_r) + H (D \tilde{w}_r + r,y) < \bar{H}(r)\quad\text{in }\R^n.
\end{equation}
In other words, $\tilde{w}_r$ is a strict viscosity subsolution of the PDE that $w_r$, which is precisely \eqref{eq:cell} with $p$ replaced by $r$. Therefore, it follows from the comparison principle that $\tilde{w}_r - w_r$ cannot attain any local maximum. However, as $\tilde{w}_r - w_r$ being a non-constant continuous periodic function, it surely attains local maximum at some point, whence we arrive at a contradiction. Therefore, we must have \eqref{eq:Hb-convex} for any $0\leq t\leq 1$ and any $p,q\in\R^n$. 

Finally let us prove \eqref{eq:Hb-C01}. To do so, we go back to the penalized problem \eqref{eq:wd-pde}. Analogous with the notation $w_p$, let us denote by $w_p^\delta$ the unique viscosity solution of \eqref{eq:wd-pde} corresponding to $p$. Due to the uniform gradient estimate \eqref{eq:wd-osc} and the regularity assumption \eqref{eq:H-Ck-C01}, we have
\begin{equation*}
| H( Dw_p^\delta +p,y) - H(D w_q^\delta + q,y) | \leq C( 1+ |p| + |q| ) |p-q|,
\end{equation*}
where $C>0$ depends only on $n$, $\lambda$, $\Lambda$, $\alpha$, $\alpha'$, $\beta$, $\beta'$ and $K$. Therefore, we have
\begin{equation*}
\begin{split}
 - \tr (A(y) D^2 w_p^\delta) + H(D w_p^\delta + q, y) + \delta w_p^\delta \leq C(1+|p| + |q|) |p-q|\quad\text{in }\R^n,
\end{split}
\end{equation*}
in the viscosity sense. In other words, $w_p^\delta - \delta^{-1} C ( 1+ |p| + |q|)|p-q|$ is a viscosity subsolution of \eqref{eq:wd-pde} with $p$ replaced by $q$. Hence, it follows from the comparison principle that 
\begin{equation*}
\delta w_p^\delta - \delta w_q^\delta \leq C (1 + |p| + |q|)|p-q|,
\end{equation*}
on $\R^n$. Passing to the limit $\delta \ra 0$ in the last inequality, we arrive at
\begin{equation*}
\bar{H}(p) - \bar{H}(q) \leq C(1+ |p| + |q|) | p - q|
\end{equation*}
Similarly, one may also obtain that
\begin{equation*}
\bar{H}(q) - \bar{H}(p) \leq C(1+|p| + |q|) | p - q|,
\end{equation*}
proving \eqref{eq:Hb-C01}. This completes the proof of Lemma \ref{lemma:Hb}.
\end{proof}


\section{Regularity in the Slow Variable}\label{section:reg}

In this section, we shall investigate the regularity of $\bar{H}$ and $w$ in the slow variable $p$. Such a regularity has been established in the authors' previous works \cite{KL1} and \cite{KL2}, for fully nonlinear elliptic and, respectively, parabolic PDEs. Let us first observe the continuity of $w$ in $p$ variable. 

\begin{lemma}\label{lemma:w-Linf-C2a} $w\in C(\R^n;C^{2,\mu}(\R^n))$, for any $0<\mu<1$, and 
\begin{equation}\label{eq:w-Linf-C2a}
(1+|p|) \left( \norm{w (p,\cdot)}_{L^\infty(\R^n)} \norm{ D_y w(p,\cdot)}_{L^\infty(\R^n)} \right) + \norm{ D_y^2 w(p,\cdot) }_{C^\mu(\R^n)} \leq C(1+|p|^2),
\end{equation}
where $C>0$ depends only on $n$, $\lambda$, $\Lambda$, $\alpha$, $\alpha'$, $\beta$, $\beta'$, $K$ and $\mu$. 
\end{lemma}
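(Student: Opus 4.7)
The plan is to separate the statement into two independent claims: the pointwise-in-$p$ estimate (\ref{eq:w-Linf-C2a}) and the continuity $w\in C(\R^n;C^{2,\mu}(\R^n))$. The estimate itself is not new content: for each fixed $p$ the function $w(p,\cdot)$ is, by definition, the unique periodic viscosity solution of \eqref{eq:w-pde} normalized by (\ref{eq:w-0}), so applying (\ref{eq:cell-C2a}) from Lemma \ref{lemma:cell} at that $p$ and using $w(p,0)=0$ immediately yields (\ref{eq:w-Linf-C2a}). The substantive content, then, is the continuity of $p\mapsto w(p,\cdot)$ into $C^{2,\mu}(\R^n)$.

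For the continuity, the plan is a standard compactness and uniqueness argument. Fix $0<\mu<1$ and choose an auxiliary exponent $\mu<\mu'<1$. Given $p_k\ra p$ in $\R^n$, the sequence $\{p_k\}$ is bounded, so applying the already-established pointwise bound with exponent $\mu'$ gives a uniform $C^{2,\mu'}(\R^n)$ estimate on $\{w(p_k,\cdot)\}$. By periodicity, this family lives in a compact subset of $C^{2,\mu}(\R^n)$ (via the compact embedding $C^{2,\mu'}(\T^n)\hookrightarrow C^{2,\mu}(\T^n)$). Hence every subsequence admits a further subsequence converging in $C^{2,\mu}(\R^n)$ to some periodic limit $v$ with $v(0)=0$. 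By the local Lipschitz estimate (\ref{eq:Hb-C01}) we have $\bar{H}(p_k)\ra\bar{H}(p)$, and by continuity of $H$ in both arguments together with the $C^{2,\mu}$ convergence, the limit $v$ is a classical periodic solution of the cell problem at $p$ with right-hand side $\bar{H}(p)$. The uniqueness part of Lemma \ref{lemma:cell}, combined with $v(0)=0$, identifies $v=w(p,\cdot)$. A subsequence-of-subsequence argument then upgrades this to convergence of the full sequence $w(p_k,\cdot)\ra w(p,\cdot)$ in $C^{2,\mu}(\R^n)$, which is the desired continuity.

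I do not anticipate any genuine obstacle. The only mild subtlety is that (\ref{eq:cell-C2a}) provides only a $C^{2,\mu}$ bound for each individual $p$ with no quantitative modulus of continuity in $p$, so to obtain convergence in $C^{2,\mu}$ (rather than merely in $C^{2,\mu''}$ for some $\mu''<\mu$) one must spend a bit of Hölder regularity, bounding the family uniformly in $C^{2,\mu'}$ with $\mu'>\mu$ and trading that extra regularity for compactness into $C^{2,\mu}$. Since the bound of Lemma \ref{lemma:cell} is valid for every $0<\mu'<1$, no sharpness is lost.
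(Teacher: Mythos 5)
Your proposal is correct and takes essentially the same approach as the paper: the bound \eqref{eq:w-Linf-C2a} is read off directly from \eqref{eq:cell-C2a} and the normalization $w(p,0)=0$, and the continuity is established via uniform H\"older bounds, Arzel\`a--Ascoli compactness for periodic functions, the stability of viscosity solutions together with $\bar{H}(p_k)\to\bar{H}(p)$ from \eqref{eq:Hb-C01}, and the uniqueness part of Lemma \ref{lemma:cell}, finished by a subsequence-of-subsequence argument. The only minor difference is cosmetic: you secure convergence in $C^{2,\mu}$ by taking a uniform bound in $C^{2,\mu'}$ with $\mu'>\mu$, whereas the paper extracts convergence in $C^{2,\mu'}$ for every $\mu'<\mu$; since $\mu\in(0,1)$ is arbitrary the two are equivalent.
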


\begin{proof} Let us fix $0<\mu<1$. The estimate \eqref{eq:w-Linf-C2a} follows immediately from \eqref{eq:cell-C2a} and the choice of $w$ that $w(p,0) = 0$. Thus, we prove that $w$ is continuous in $p$ variable with respect to the $C^{2,\mu}$ norm in $y$ variable. 

Let $\{p_k\}_{k=1}^\infty$ be a sequence of vectors in $\R^n$ converging to some $p_0\in\R^n$ as $k\ra\infty$. Let us write, for the notational convenience, $w_k (y) = w(p_k,y)$ and $\gamma_k = \bar{H}(p_k)$ for $k=0,1,2,\cdots$. We already know from \eqref{eq:Hb-C01} that $\gamma_k\ra \gamma_0$ as $k\ra\infty$. Hence, it suffices to prove that $w_k \ra w_0$ in $C^{2,\mu'}(\R^n)$ as $k\ra\infty$, for any $0<\mu'<\mu$.  

Due to \eqref{eq:w-Linf-C2a}, we know that $\{w_k\}_{k=1}^\infty$ is uniformly bounded in $C^{2,\mu}(\R^n)$, for any $0<\mu<1$. Now that $w_k$ is periodic for all $k=1,2,\cdot,s$, the Arzela-Ascoli theorem yields that for any subsequence $\{v_k\}_{k=1}^\infty \subset \{w_k\}_{k=1}^\infty$ there are a further subsequence $\{v_{k_i}\}_{i=1}^\infty$ and a periodic function $v\in C^{2,\mu}(\R^n)$ such that $v_{k_i}\ra v$ in $C^{2,\mu}(\R^n)$, for any $0<\mu<1$, as $i\ra\infty$. Now that $p_{k_i}\ra p_0$ and $\gamma_{k_i}\ra \gamma_0$ as $i\ra\infty$, we deduce from the stability of viscosity solutions that $w'$ and $\gamma'$ satisfies
\begin{equation*}
- \tr(A(y) D^2 v) + H(Dv + p_0,y) = \gamma_0 \quad \text{in }\R^n. 
\end{equation*}
Since $v(0) = 0$, the second part of Lemma \ref{lemma:cell} implies that $v = w_0$. This shows that any subsequence of $\{w_k\}_{k=1}^\infty$ contains a further subsequence that converges to $w_0$ in $C^{2,\mu'}(\R^n)$, for any $0<\mu'<\mu$. Therefore, $w_k \ra w_0$ in $C^{2,\mu'}(\R^n)$, for any $0<\mu'<\mu$ as $k\ra\infty$, which completes the proof. 
\end{proof}

Next we prove that $\bar{H}$ and $w$ are continuously differentiable in $p$. 

\begin{lemma}\label{lemma:w-C1-C2a} $\bar{H} \in C^1(\R^n)$ and 
\begin{equation}\label{eq:Hb-C1}
| D_p \bar{H}(p)| \leq C(1+|p|),
\end{equation}
where $C>0$ depends only on $n$, $\lambda$, $\Lambda$, $\alpha$, $\alpha'$, $\beta$, $\beta'$ and $K$. Moreover, $w\in C^1(\R^n; C^{2,\mu}(\R^n))$, for any $0<\mu<1$, such that for any $L>0$ and any $p\in B_L$, 
\begin{equation}\label{eq:w-C1-C2a}
\norm{ D_p w(p,\cdot)}_{C^{2,\mu}(\R^n)} \leq C_L,
\end{equation} 
where $C_L>0$ depends only on $n$, $\lambda$, $\Lambda$, $\alpha$, $\alpha'$, $\beta$, $\beta'$, $K$, $\mu$ and $L$.  
\end{lemma}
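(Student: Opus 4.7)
The plan is to establish differentiability by linearizing the cell problem \eqref{eq:w-pde} with respect to $p$. Formally differentiating \eqref{eq:w-pde} in $p_i$ and writing $W^i = \partial_{p_i} w$ yields the linear cell problem
$$-\tr(A(y) D_y^2 W^i) + B(p,y)\cdot(D_y W^i + e_i) = \partial_{p_i}\bar H(p) \quad\text{in }\R^n,$$
where $B(p,y) := D_p H(D_y w(p,y) + p, y)$ is periodic and, by Lemma \ref{lemma:w-Linf-C2a} together with \eqref{eq:H-Ck-C01}, lies in $C^\mu(\R^n;\R^n)$ with norm bounded by $C(1+|p|)$. My first task will be to show that, for each $p$, there is a unique real number on the right hand side for which this equation admits a periodic $C^{2,\mu}$ solution, with the solution unique up to an additive constant. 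I will mirror the proof of Lemma \ref{lemma:cell}: solve the penalized problem $\mathcal L_p V^\delta + \delta V^\delta = -B(p,\cdot)\cdot e_i$, where $\mathcal L_p := -\tr(A\, D_y^2) + B(p,\cdot)\cdot D_y$; obtain uniform control of $\delta V^\delta$ and of $V^\delta - V^\delta(0)$ via the maximum principle and interior Schauder estimates; and pass to $\delta\to 0$, using the Liouville-type argument from Lemma \ref{lemma:cell} to pin down the unique compatibility constant.

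Next I will justify this formal computation via difference quotients. For $h\neq 0$ small, set $v_h(y) = \frac{w(p+he_i,y) - w(p,y)}{h}$; note $v_h(0) = 0$ because of \eqref{eq:w-0}. Subtracting the cell equations at $p+he_i$ and $p$ and invoking the mean value theorem in the $p$-argument of $H$ produces
$$-\tr(A(y)D_y^2 v_h) + B_h(y)\cdot(D_y v_h + e_i) = \frac{\bar H(p+he_i) - \bar H(p)}{h},$$
with $B_h(y) = \int_0^1 D_p H\bigl(s D_y w(p+he_i,y) + (1-s)D_y w(p,y) + p + she_i, y\bigr)\,ds$. The right hand side is bounded in $h$ by \eqref{eq:Hb-C01}, and Lemma \ref{lemma:w-Linf-C2a} gives $B_h \to B(p,\cdot)$ in $C^\mu$ as $h\to 0$. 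Using $v_h(0)=0$ together with Schauder-type estimates for $\mathcal L_p$, I will extract uniform $C^{2,\mu}$ bounds on $v_h$, pass to a subsequential limit $v_{h_k}\to V$ in $C^{2,\mu'}$ and $(\bar H(p+h_k e_i) - \bar H(p))/h_k \to c$, and identify $(V,c)$ as a solution of the linearized cell problem above. Uniqueness from the first step forces $V = W^i$ and $c = \partial_{p_i}\bar H(p)$, giving convergence of the full sequence and hence pointwise differentiability.

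The bound \eqref{eq:Hb-C1} will then follow from the fact that the compatibility constant in the linearized cell problem is controlled by the sup norm of $B(p,\cdot)$, which is $O(1+|p|)$. The bound \eqref{eq:w-C1-C2a} for $p\in B_L$ will follow from the Schauder estimate in the first step, together with the uniform $C^\mu$ control of $B(p,\cdot)$ on $B_L$ coming from Lemma \ref{lemma:w-Linf-C2a} and \eqref{eq:H-Ck-C01}. For the $C^1$ continuity of both $\bar H$ and $w$, I will re-run the linearization-plus-uniqueness argument on the difference of the linearized solutions at two values of $p$, whose forcing term vanishes as the two values merge. I expect the main technical hurdle to be the first step: setting up the Fredholm-type alternative for the non-divergence linear cell problem and pinning down its single admissible compatibility constant, since once that is in place, the passage to the limit in the difference quotients repeats the scheme of Lemma \ref{lemma:cell} essentially verbatim.
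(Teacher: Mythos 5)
Your proposal is correct and follows essentially the same route as the paper: difference quotients for $w$ and $\bar{H}$, identification of the limit via the linearized cell problem with drift $B(p,\cdot) = D_p H(D_y w(p,\cdot)+p,\cdot)$, and uniqueness of the compatibility constant to pass from subsequential to full convergence. The only structural difference is organizational: what you call your "first task" (solvability, uniqueness of the compatibility constant, and $C^{2,\mu}$ bounds for the linear cell problem $-\tr(A D^2 v) + B\cdot(Dv+p)=\gamma$) is exactly the content of the paper's separate Lemma~\ref{lemma:cell-lin}, which the paper states first and then invokes in the proof of Lemma~\ref{lemma:w-C1-C2a}, whereas you propose to establish it inline; and for the $C^1$ continuity of $D_p\bar H$, $D_p w$, you suggest a direct perturbation of the linearized equation while the paper re-runs the compactness-plus-uniqueness argument of Lemma~\ref{lemma:w-Linf-C2a}, but both are standard and equivalent in spirit.
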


\begin{proof} Let us fix $0<\mu<1$. Throughout this proof, we shall write by $C_{*,\cdots,*}$ a positive constant depending at most on the parameters on the subscripts, if any, as well as $n$, $\lambda$, $\Lambda$, $\alpha$, $\alpha'$, $\beta$, $\beta'$, $K$ and $\mu$. We will also let it differ from one line to another, unless stated otherwise. 

Fix $L>0$, $p\in B_L$, $0<\mu<1$ and $1\leq k\leq n$. Write $w_h (y) = w(p+h e_k,y)$ and $\gamma_h = \bar{H}(p+he_k)$ for any $h\in\R$ with $|h|\leq 1$. Also write $W_h (y) = h^{-1} (w_h(y) - w_0(y))$, and $\Gamma_h = h^{-1}(\gamma_h - \gamma_0)$. Then $W_h$ turns out to be a periodic viscosity solution to
\begin{equation}\label{eq:Wh-pde}
- \tr(A(y) D^2 W_h) + B_h(y)\cdot (D W_h + e_k) = \Gamma_h\quad\text{in }\R^n,
\end{equation}
where 
\begin{equation*}
B_h(y) = \int_0^1 D_p H( t D_y w_h + (1-t) D_y w_0 + p + the, y)dt.
\end{equation*}

It follows from \eqref{eq:w-Linf-C2a} and \eqref{eq:H-Ck-C01} that $B_h \in C^\mu(\R^n)$ and
\begin{equation}\label{eq:Bh-Ca}
\norm{ B_h }_{C^\mu(\R^n)} \leq C( 1+ |p|),
\end{equation}
for any $h\in\R$ with $|h|\leq 1$. Moreover, we know from \eqref{eq:Hb-C01} that 
\begin{equation*}
| \Gamma_h | \leq C(1 + |p|),
\end{equation*}
for any $h\in\R$ with $0<|h|\leq 1$. Let us point out that the constant $C$ in the last inequality for $\Gamma_h$ depends only on $n$, $\alpha$, $\alpha'$, $\beta$, $\beta'$ and $K$. 

One may notice that \eqref{eq:Wh-pde} belongs to the same class of \eqref{eq:cell-lin}, whence it follows from Lemma \ref{lemma:cell-lin} below that $W_h\in C^{2,\mu}(\R^n)$ and 
\begin{equation}\label{eq:Wh-C2a}
\norm{ W_h }_{C^{2,\mu}(\R^n)} \leq C_L,
\end{equation}
for any $h\in\R$ with $0<|h|\leq 1$. On the other hand, from the fact that Lemma \ref{lemma:w-Linf-C2a} implies $D w_h \ra D w_0$ in $C^{1,\mu}(\R^n)$, we know that $B_h \ra B_0$ in $C^\mu(\R^n)$, where $B_0$ is defined by
\begin{equation*}
B_0(y) = D_p H (D_y w_0 + p, y).
\end{equation*}
As with the estimate \eqref{eq:Bh-Ca}, we also know that
\begin{equation}\label{eq:B0-Ca}
\norm{ B_0 }_{C^\mu(\R^n)} \leq C(1+|p|). 
\end{equation}

According to the Arzela-Ascoli theorem, there is some $W_0\in C^{2,\mu}(\R^n)$ such that $W_h \ra W_0$ in $C^{1,\mu'}(\R^n)$ for any $0<\mu'<\mu$, along a subsequence. Moreover, we may choose $\Gamma_0\in\R$ such that $\Gamma_h \ra \Gamma_0$ along a further subsequence. Then by the stability of viscosity solutions, $W_0$ becomes a periodic solution to
\begin{equation}\label{eq:W0-pde}
- \tr (A(y) D^2 W_0) + B_0 (y) \cdot (D_y W_0 + e_k) = \Gamma_0\quad\text{in }\R^n.
\end{equation}

Now that \eqref{eq:W0-pde} belongs to the same class of \eqref{eq:cell-lin}, it follows from Lemma \ref{lemma:cell-lin} below that $\Gamma_0$ is unique, and satisfies
\begin{equation}\label{eq:Gamma-Linf}
|\Gamma_0| \leq C(1+|p|),
\end{equation}
due to \eqref{eq:B0-Ca}. From the uniqueness of the limit $\Gamma_0$, we infer that $\Gamma_h \ra \Gamma_0$ without extracting any subsequence. By definition, $\Gamma_0 = D_{p_k} \bar{H}(p)$. 

Moreover, since any limit $W_0$ of $\{W_h\}_{0<|h|\leq 1}$ satisfies $W_0(0) = 0$, we also have from the last part of Lemma \ref{lemma:cell-lin} below that $W_0$ is unique, and belongs to $C^{2,\mu}(\R^n)$, with the estimate
\begin{equation}\label{eq:W0-C2a}
\norm{W_0}_{C^{2,\mu}(\R^n)} \leq C_L.
\end{equation}
Owing to the uniqueness of the limit $W_0$, again we conclude that $W_h\ra W_0$ in $C^{2,\mu}(\R^n)$ along the full sequence, which implies that $W_0 = D_{p_k} w(p,\cdot)$.

The continuity of $D_{p_k} \bar{H}$ and $D_{p_k} w$ in variable $p$ can be proved similarly as in the proof of Lemma \ref{lemma:w-Linf-C2a}. To avoid repeating arguments, we omit the details and leave this part to the reader. 
\end{proof}

\begin{lemma}\label{lemma:cell-lin} Let $B\in C^\mu(\R^n)$ be a periodic, vector-valued mapping. Then for each $p\in\R^n$, there exists a unique real number, $\gamma$, for which the following PDE,
\begin{equation}\label{eq:cell-lin}
-\tr (A(y) D^2 v) + B(y)\cdot (D v + p) = \gamma\quad\text{in }\R^n,
\end{equation}
admits a periodic viscosity solution $v\in C^{2,\mu}(\R^n)$. Moreover, $\gamma$ satisfies
\begin{equation}\label{eq:cell-lin-Linf}
|\gamma| \leq |p| \norm{B}_{L^\infty(\R^n)}.
\end{equation}
Furthermore, a periodic viscosity solution $v$ of \eqref{eq:cell-lin} is unique up to an additive constant, and satisfies
\begin{equation}\label{eq:cell-lin-C2a}
\norm{ v - v(0) }_{C^{2,\mu}(\R^n)} \leq C |p|,
\end{equation}
where $C>0$ depends only on $n$, $\lambda$, $\Lambda$, $\mu$ and $\norm{B}_{C^\mu(\R^n)}$. 
\end{lemma}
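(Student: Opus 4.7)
The approach is to mirror the proof of Lemma~\ref{lemma:cell} with the convex Hamiltonian $H$ replaced by the linear term $B(y)\cdot(\cdot+p)$, and the weak Bernstein method replaced by standard linear elliptic estimates. First I would introduce, for each $\delta>0$, the penalized equation
\begin{equation*}
-\tr(A(y) D^2 v^\delta) + B(y)\cdot (D v^\delta + p) + \delta v^\delta = 0 \quad\text{in }\R^n.
\end{equation*}
Since the operator is uniformly elliptic with coefficients in $C^\mu$ and has a positive zero-th order term, standard linear elliptic theory produces a unique continuous solution $v^\delta$, which is periodic by uniqueness. The constants $\pm\delta^{-1}|p|\norm{B}_{L^\infty(\R^n)}$ are super- and subsolutions, so comparison yields $|\delta v^\delta|\leq |p|\norm{B}_{L^\infty(\R^n)}$ on $\R^n$.

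Next I would establish uniform estimates for $v^\delta-v^\delta(0)$. Writing the equation as $-\tr(A D^2 v^\delta) + B\cdot D v^\delta = -B\cdot p - \delta v^\delta$, the right hand side is bounded in $C^\mu$ by $C|p|$, with $C$ depending only on $\norm{B}_{C^\mu(\R^n)}$. An interior Hölder estimate for linear elliptic operators together with the periodicity of $v^\delta$ gives $\osc_{\R^n}v^\delta\leq C|p|$, and the interior Schauder estimate then produces
\begin{equation*}
\norm{v^\delta-v^\delta(0)}_{C^{2,\mu}(\R^n)}\leq C|p|,
\end{equation*}
with $C$ depending only on $n$, $\lambda$, $\Lambda$, $\mu$ and $\norm{B}_{C^\mu(\R^n)}$. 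Arzelà-Ascoli then extracts a subsequence along which $v^\delta-v^\delta(0)\to v$ in $C^{2,\mu'}(\R^n)$ for any $\mu'<\mu$, and $-\delta v^\delta\to\gamma$ uniformly; passing to the limit in the linear equation, the pair $(v,\gamma)$ is a periodic $C^{2,\mu}$ solution of \eqref{eq:cell-lin} satisfying \eqref{eq:cell-lin-Linf} and \eqref{eq:cell-lin-C2a}, with $v(0)=0$ by construction.

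For uniqueness up to an additive constant in $v$, if $(v,\gamma)$ and $(v',\gamma)$ are two periodic $C^{2,\mu}$ solutions of \eqref{eq:cell-lin}, then $V=v-v'$ is a bounded periodic classical solution of $-\tr(A D^2 V)+B\cdot DV=0$, so the strong maximum principle applied at a global maximum of $V$ forces $V$ to be constant.

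The main subtlety is the uniqueness of $\gamma$, because the strict-subsolution-at-a-local-maximum argument from Lemma~\ref{lemma:cell} produces no contradiction in the linear setting. To handle it I would use a penalization identification trick: if $(v,\gamma)$ is any periodic solution of \eqref{eq:cell-lin}, set $z^\delta=v^\delta-v+\gamma/\delta$. A direct computation, using that $L:=-\tr(A D^2\cdot)+B\cdot D$ annihilates constants, shows that
\begin{equation*}
L z^\delta + \delta z^\delta = -\delta v,
\end{equation*}
and comparison with the constants $\pm\norm{v}_{L^\infty(\R^n)}$ yields $|\delta v^\delta - \delta v + \gamma|\leq \delta\norm{v}_{L^\infty(\R^n)}$. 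Letting $\delta\to 0$ gives $\gamma = -\lim_{\delta\to 0}\delta v^\delta$, a quantity determined solely by the family $\{v^\delta\}$; in particular $\gamma$ is unique, which completes the proof.
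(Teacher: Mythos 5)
Your proposal is correct and, for the existence, Schauder estimates, compactness, and uniqueness of $v$ up to a constant, it follows exactly the path the paper intends when it says the proof is ``essentially the same'' as that of Lemma~\ref{lemma:cell}, with the weak Bernstein method sensibly replaced by the interior H\"older/Schauder estimates for the linear operator.

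The one place you deviate is the uniqueness of $\gamma$, and there your stated motivation rests on a misapprehension. The strict-subsolution-at-a-local-maximum argument from Lemma~\ref{lemma:cell} does \emph{not} break in the linear setting: it is a pure ellipticity argument and does not use convexity of the Hamiltonian. Concretely, if $(v,\gamma)$ and $(v',\gamma')$ are periodic $C^{2,\mu}$ solutions of \eqref{eq:cell-lin} with $\gamma>\gamma'$, then at any local maximum $y_0$ of $v'-v$ one has $D(v'-v)(y_0)=0$ and $D^2(v'-v)(y_0)\le 0$, so subtracting the two equations gives
\begin{equation*}
\gamma'-\gamma = -\tr\bigl(A(y_0)D^2(v'-v)(y_0)\bigr) + B(y_0)\cdot D(v'-v)(y_0) \ge 0,
\end{equation*}
a contradiction. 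The only structural ingredient is degenerate ellipticity, and it is available here. That said, the penalization-identification argument you substitute---showing $\gamma=-\lim_{\delta\to 0}\delta v^\delta$ by comparing $v^\delta$ with $v-\gamma/\delta$---is also correct and is a clean, self-contained way to pin down the ergodic constant; it buys slightly more in that it identifies $\gamma$ along the full family $\{v^\delta\}$ without extracting a subsequence, though it costs an extra comparison step. Either route is fine; just be aware that the classical argument would have worked here too.
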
 

\begin{proof} The proof is essentially the same with that of Lemma \ref{lemma:cell}, and hence it is omitted. 
\end{proof}

In what follows, let us write $\bar{B}(p) = D_p \bar{H}(p)$, $v(p,y) = D_pw(p,y)$ and 
\begin{equation}\label{eq:b}
B(p,y) = D_p H ( D_y w(p,y) + p, y).
\end{equation}
In view of the proof of Lemma \ref{lemma:w-C1-C2a}, we may understand $\bar{B}(p)$ as the unique real vector in $\R^n$ for which the following (decoupled) system,
\begin{equation}\label{eq:v-pde}
-\tr(A(y) D^2 v) + B(p,y)\cdot ( D_y v + I )  = \bar{B}(p),
\end{equation}
has a periodic viscosity solution, where $I$ is the identity matrix in $\sS^n$. Moreover, $v(p,\cdot)$ can be considered as the unique periodic viscosity solution of \eqref{eq:v-pde} such that
\begin{equation}\label{eq:v-0}
v(p,0) = 0.
\end{equation}

It is remarkable that after linearization in \eqref{eq:cell}, we end up with a cell problem whose gradient part has a linear growth, as shown in \eqref{eq:v-pde}. Moreover, one may expect that the linear structure of the ``new'' cell problem \eqref{eq:v-pde} will be preserved throughout the linearization we do in the future to obtain higher regularity of $\bar{H}$ and $w$ in $p$. This is the brief idea behind the proof for the following proposition. One may find a similar proposition for uniformly elliptic, fully nonlinear PDEs in the authors' previous work \cite{KL1} and \cite{KL2}. 

\begin{lemma}\label{lemma:w-Ck-C2a} $\bar{H}\in C^\infty(\R^n)$ and $w\in C^\infty(\R^n;C^{2,\mu}(\R^n))$, for any $0<\mu<1$, such that for any $k=0,1,2,\cdots$, any $L>0$ and any $p\in B_L$,
\begin{equation}\label{eq:w-Ck-C2a}
\left| D_p^k \bar{H}(p) \right| + \norm{ D_p^k w(p,\cdot) }_{C^{2,\mu}(\R^n)} \leq C_{k,L},
\end{equation}
where $C_{k,L}>0$ depends only on $n$, $\alpha$, $\alpha'$, $\beta$, $\beta'$, $K$, $\mu$, $k$ and $L$. 
\end{lemma}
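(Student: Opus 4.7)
The plan is to prove Lemma \ref{lemma:w-Ck-C2a} by induction on $k$, using the observation that linearization of the cell problem \eqref{eq:w-pde} always lands in the class \eqref{eq:cell-lin} to which Lemma \ref{lemma:cell-lin} applies. The base cases $k=0$ and $k=1$ are already furnished by Lemma \ref{lemma:w-Linf-C2a} and Lemma \ref{lemma:w-C1-C2a}. For the inductive step, assume that for some $k\geq 1$ and every $0\leq j\leq k$ we have $\bar{H}\in C^j(\R^n)$ and $w\in C^j(\R^n;C^{2,\mu}(\R^n))$ with the estimate \eqref{eq:w-Ck-C2a} locally uniform in $p$. Fix $L>0$, $p\in B_L$, a multi-index $\alpha$ with $|\alpha|=k$, and a unit vector $e\in\R^n$. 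Introduce the difference quotients
\begin{equation*}
W_h(y) = \frac{D_p^\alpha w(p+he,y) - D_p^\alpha w(p,y)}{h},\qquad \Gamma_h = \frac{D_p^\alpha \bar{H}(p+he) - D_p^\alpha \bar{H}(p)}{h},
\end{equation*}
for $0<|h|\leq 1$.

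Next I would differentiate the cell identity \eqref{eq:w-pde} formally $\alpha$ times in $p$ and subtract the resulting equations at $p+he$ and $p$. The leading-order terms $-\tr(A(y)D^2 W_h)$ survive unchanged, while the repeated application of the chain rule to $H(D_y w(p,y)+p,y)$ produces, via Fa\`a di Bruno, a finite sum of products of the form
\begin{equation*}
D_p^{j_1} D_P^{j_2} H(D_y w+p,y) \cdot \prod_i D_p^{\beta_i} (D_y w+p),
\end{equation*}
where the multi-indices $\beta_i$ involved either have $|\beta_i|\leq k$, in which case by the inductive hypothesis they are uniformly bounded in $C^{2,\mu}(\R^n)$ for $p\in B_L$, or are the single multi-index $\alpha+e$, which assembles into the linear transport term $B(p,y)\cdot D_y W_h$ after passing to the difference quotient. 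Altogether $W_h$ satisfies, in the viscosity sense, a cell problem
\begin{equation*}
-\tr(A(y) D^2 W_h) + B_h(y)\cdot(D_y W_h + E_h) = F_h(y) + \Gamma_h\quad\text{in }\R^n,
\end{equation*}
in which $B_h\to B(p,\cdot)$ in $C^\mu(\R^n)$ and $E_h,F_h$ are bounded in $C^\mu(\R^n)$ by a constant depending only on $n,\lambda,\Lambda,\alpha,\alpha',\beta,\beta',K,\mu,k,L$, thanks to the smoothness of $H$ in $p$, the Lipschitz bound \eqref{eq:H-Ck-C01} in $y$, and the inductive $C^{2,\mu}$ control on $D_p^j w$ for $j\leq k$.

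With this equation in hand, Lemma \ref{lemma:cell-lin} delivers a uniform bound $\|W_h\|_{C^{2,\mu}(\R^n)} + |\Gamma_h|\leq C_{k+1,L}$, independent of $h$. Passing to the limit $h\to 0$ proceeds exactly as in the proof of Lemma \ref{lemma:w-C1-C2a}: Arzel\`a--Ascoli extracts $W_0\in C^{2,\mu}(\R^n)$ and $\Gamma_0\in\R$ as limits along a subsequence in the $C^{2,\mu'}$ topology ($\mu'<\mu$); stability of viscosity solutions shows that $(W_0,\Gamma_0)$ solves the limiting linear cell problem with coefficients $B(p,\cdot)$ and right-hand side given by the $h=0$ limits of $F_h,E_h$, which is of the class \eqref{eq:cell-lin}; the uniqueness of $\Gamma_0$ and the normalization $W_0(0)=0$ force $\Gamma_h\to \Gamma_0=D_p^{\alpha+e}\bar{H}(p)$ and $W_h\to W_0=D_p^{\alpha+e}w(p,\cdot)$ along the full sequence, in $C^{2,\mu'}$ for every $\mu'<\mu$. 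The locally uniform $C^{2,\mu}$ bound on $W_0$ from Lemma \ref{lemma:cell-lin} is precisely the bound \eqref{eq:w-Ck-C2a} at order $k+1$. Continuity of $D_p^{k+1}\bar{H}$ and of $D_p^{k+1}w(\cdot,\cdot)$ with values in $C^{2,\mu}(\R^n)$ is obtained by the same compactness-plus-uniqueness argument used at the end of the proof of Lemma \ref{lemma:w-Linf-C2a}, applied to the limiting linear cell problem.

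The main obstacle is purely bookkeeping: one must verify that, after Fa\`a di Bruno expansion of the nonlinear term, every factor appearing in $B_h$, $E_h$, $F_h$ is controlled in $C^\mu(\R^n)$ uniformly for $p\in B_L$ and small $h$. This reduces to the observation that $D_p^j H(P,y)$ is smooth in $P$ and Lipschitz in $y$ by \eqref{eq:H-Ck-C01}, that $D_y w + p$ enjoys $C^{2,\mu}$ regularity in $y$ by the inductive hypothesis, and that each $D_p^\beta (D_y w)$ with $|\beta|\leq k$ is bounded in $C^{1,\mu}(\R^n)$ by the same hypothesis; all compositions and products then stay in $C^\mu(\R^n)$ with quantitative bounds, which is exactly the hypothesis needed to invoke Lemma \ref{lemma:cell-lin}. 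No genuinely new analytic difficulty appears beyond that already treated at order $k=1$ in Lemma \ref{lemma:w-C1-C2a}.
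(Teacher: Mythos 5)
Your argument is correct and rests on the same core observation as the paper's: linearizing the cell problem \eqref{eq:w-pde} in $p$ always produces an equation in the class of \eqref{eq:cell-lin}, to which Lemma \ref{lemma:cell-lin} supplies uniform $C^{2,\mu}$ bounds on difference quotients, and one then passes to the limit by Arzel\`a--Ascoli, stability of viscosity solutions, and uniqueness, exactly as at order $k=1$ in Lemma \ref{lemma:w-C1-C2a}. The organization is, however, somewhat different. The paper never differentiates the nonlinear identity more than once; instead it treats $v = D_p w$ and $\bar{B} = D_p\bar{H}$ as the new state, takes first-order difference quotients of the already-linear cell problem \eqref{eq:v-pde}, obtains the linear equation \eqref{eq:Vh-pde}, and simply iterates, so all Fa\`a di Bruno combinatorics are absorbed into the inductive regularity of $B(p,\cdot)$ and the source term rather than displayed. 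You, by contrast, apply $D_p^\alpha$ with $|\alpha| = k$ directly to \eqref{eq:w-pde} and expose the Fa\`a di Bruno sum in one shot; this needs more explicit bookkeeping to verify the $C^\mu$ control of your $E_h,F_h$, but lands on the same linearized equation. A second, minor difference: the paper inserts a preliminary step re-deriving a Lipschitz bound on $\bar{B}$ in $p$ via the penalized problem \eqref{eq:vd-pde}, mirroring the proof of \eqref{eq:Hb-C01}, before taking difference quotients; you skip this and let the $L^\infty$ estimate \eqref{eq:cell-lin-Linf} of Lemma \ref{lemma:cell-lin} bound $\Gamma_h$ directly, which is adequate and arguably streamlines the iteration. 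No gap.
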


\begin{proof} We follow the proof of Lemma \ref{lemma:w-C1-C2a}. Due to Lemma \ref{lemma:w-C1-C2a} and the regularity assumption \eqref{eq:H-Ck-C01}, we already know that $B\in C^1(\R^n;C^{2,\mu}(\R^n))$, for any $0<\mu<1$, with $B$ defined in \eqref{eq:b}. Thus, in order to run the same argument in the proof of Lemma \ref{lemma:w-C1-C2a}, we need the Lipschitz regularity of $\bar{B} = D_p\bar{H}$ in $p$. However, this can be shown as in the proof for \eqref{eq:Hb-C01} of Lemma \ref{lemma:Hb}. This is because we can also understand the constant vector $\bar{B}(p)$ as the limit of $\{-\delta v^\delta\}_{\delta>0}$, with $v^\delta$ being the unique periodic viscosity solution of 
\begin{equation}\label{eq:vd-pde}
-\tr(A(y) D^2 v^\delta) + B(p,y) \cdot (D_y v^\delta + I) + \delta v^\delta = 0\quad\text{in }\R^n.
\end{equation}

Once we know that $\bar{B}$ is Lipschitz in $p$, then it follows from Lemma \ref{lemma:w-C1-C2a} and the elliptic regularity theory that the difference quotient $V_h = h^{-1}( v_h -  v_0)$, being a periodic viscosity solution of 
\begin{equation}\label{eq:Vh-pde}
-\tr(A(y) D^2 V_h) + B_h(y) \cdot D_y V_h + B_h(y) \cdot (D_y v_0(y) + I) = \bar{B}_h\quad\text{in }\R^n,
\end{equation}
with $v_h = v(p+he_k,\cdot)$, $B_h = B(p+he_k,\cdot)$, $B_h = h^{-1} (B_h - B_0)$ and $\bar{B}_h = h^{-1}(\bar{B}_h - \bar{B}_0)$, is uniformly bounded in $C^{2,\mu}(\R^n)$. Hence, we deduce from the stability of viscosity solutions that any pair $(V_0,\bar{B}_0)$ of $\{V_h\}_{0<|h|\leq 1}$ and, respectively, $\{\bar{B}_h\}_{0<|h|\leq 1}$ must satisfy 
\begin{equation}\label{eq:V0-pde}
-\tr(A(y) D^2 V_0) + B_0(y) \cdot D_y V_0 + B_0(y) \cdot (D_y v_0(y) + I) = \bar{B}_0\quad\text{in }\R^n.
\end{equation}

Since \eqref{eq:V0-pde} belongs to the same class of \eqref{eq:cell-lin}, we know from Lemma \ref{lemma:cell-lin} that $V_0$ and $\bar{B}_0$ are unique. Thus, we derive the differentiability of $\bar{B}$ and $v$ in $p$. Arguing as in the proof of Lemma \ref{lemma:w-Linf-C2a}, we may also observe that $D_p \bar{B}$ and $D_p v$ are continuous in $p$.

One may now iterate this argument to obtain higher regularity of $\bar{B}$ and $v$ in $p$, which automatically implies that of $\bar{H}$ and $w$. We leave out the details to the reader. 
\end{proof}


\section{Interior Corrector and Higher Order Convergence Rate}\label{section:cor}

In this section, we construct the higher order interior correctors for the homogenization problem \eqref{eq:HJ}, based on the regularity result achieved in Section \ref{section:reg}. 

We begin with the effective Hamilton-Jacobi equation for \eqref{eq:HJ}, which is given by
\begin{equation}\label{eq:eff-HJ}
\begin{dcases}
\p_t \bar{u}_0 + \bar{H} ( D\bar{u}_0 ) = 0 & \text{in }\R^n\times(0,\infty),\\
\bar{u}_0 = g & \text{on }\R^n\times\{t=0\}.
\end{dcases}
\end{equation}
The characteristic curve, which starts from $x\in\R^n$, is given by 
\begin{equation}\label{eq:char}
\xi(t;x) = x + D_p \bar{H} ( D_x g(x)) t.
\end{equation}
Note that this is indeed a line with direction $D_p \bar{H} ( D_x g(x))$. Moreover, the gradient of $\bar{u}$ is constant along this curve. To be specific, we have 
\begin{equation}\label{eq:char-Dub}
D_x\bar{u}(\xi(t;x),t) = D_x g(x).
\end{equation} 

It is noteworthy that the initial data, $g$, does not play any role when deriving the effective Hamiltonian $\bar{H}$, as shown in Section \ref{section:reg}. This allows us to choose the initial data $g$ {\it a posteriori} so as to make sure that 
\begin{equation}\label{eq:char-inj}
\{\xi(t;x) : t>0\} \bigcap \{ \xi(t;x') : t>0\} = \emptyset,
\end{equation}
if and only if $x\neq x'$, as well as that  
\begin{equation}\label{eq:char-surj}
\bigcup_{x\in\R^n} \{\xi(t;x) : t>0\} = \R^n.
\end{equation}

One may easily observe that there are infinitely many initial data $g$ that satisfy the conditions \eqref{eq:char-inj} and \eqref{eq:char-surj}, once $\bar{H}$ is determined. A trivial example is any affine function whose gradient is a non-vanishing point of $D_p \bar{H}$. Note that the non-vanishing set of $D_p \bar{H}$ is always open and non-empty, since $\bar{H}$ is convex and grows quadratically at the infinity. A rather non-trivial example is any smooth, convex and globally Lipschitz function whose gradients are contained in the non-vanishing set of $D_p \bar{H}$.

Once we have the initial data $g$, we know from the characteristic equations for \eqref{eq:eff-HJ} that $\bar{u}_0\in C^\infty(\R^n\times[0,\infty))$. Setting 
\begin{equation}\label{eq:Bb}
\bar{B}(x,t) = D_p \bar{H}(D_x \bar{u}_0(x,t)),
\end{equation}
we obtain $\bar{B} \in C^\infty(\R^n\times[0,\infty))$, according to Lemma \ref{lemma:w-Ck-C2a}.

In order to have a regular solution for the first order linear PDE whose drift term is associated with $\bar{B}$, we require that 
\begin{equation}\label{eq:Bb-0}
\bar{B} (x,t) \neq 0,
\end{equation}
for any $(x,t)\in\R^n\times(0,\infty)$. Since the image of $\bar{B}$ on $\R^n\times(0,\infty)$ coincides with that of $D_p \bar{H} ( D_x g)$ on $\R^n$, we ask $D_x g$ not to be the critical points of $D_p \bar{H}$. 

Let us list up the conditions for $g$ to be imposed in the rest of this paper:
\begin{enumerate}[(i)]
\item $g$ is convex:
\begin{equation}\label{eq:g-convex}
g(tx+ (1-t)x') \leq t g(x) + (1-t) g(x'),
\end{equation}
for any $0\leq t\leq 1$ and any $x,x'\in\R^n$. 
\item $g \in C^\infty(\R^n)\cap \Lip(\R^n)$, and there is $L>0$ such that
\begin{equation}\label{eq:g-Ck}
\norm{D_x^k g}_{L^\infty(\R^n)} \leq L,
\end{equation}
for any $k=1,2,\cdots$. Moroever, $g$ is normalized such that 
\begin{equation}\label{eq:g-0}
g(0) = 0.
\end{equation}
\item $D_x g$ is not the critical points of $D_p \bar{H}$: 
\begin{equation}\label{eq:DHDg-0}
D_p \bar{H} (D_x g(x)) \neq 0,
\end{equation}
for any $x\in\R^n$. 
\end{enumerate}

Under the assumptions \eqref{eq:g-convex} -- \eqref{eq:DHDg-0} on $g$, altogether with the properties \eqref{eq:Hb-quad} -- \eqref{eq:Hb-C01} and \eqref{eq:w-Ck-C2a} of $\bar{H}$, we know from the standard regularity theory for Hamilton-Jacobi equations that $\bar{u}_0 \in C^\infty(\R^n\times[0,\infty))$, and in particular, we have, for each $i,j=0,1,2,\cdots$, and any $T>0$,
\begin{equation}\label{eq:ub0-Ck}
\left| D_x^i\p_t^j \bar{u}_0(x,t) \right| \leq C_{i,j,T},
\end{equation}
uniformly for all $(x,t)\in\R^n\times[0,T]$, where $C_{i,j,T}$ is a positive constant depending at most on $n$, $\alpha$, $\alpha'$, $\beta$, $\beta'$, $K$, $L$, $i$, $j$ and $T$. 

Moreover, due to \eqref{eq:ub0-Ck} and \eqref{eq:w-Ck-C2a}, we know that $\bar{B} \in C^\infty(\R^n\times[0,\infty))$ and, for each $i,j=0,1,2,\cdots$, and any $T>0$,
\begin{equation}\label{eq:Bb-Ck}
\left| D_x^i \p_t^j \bar{B} (x,t) \right| \leq C_{i,j,T},
\end{equation}
uniformly for all $(x,t)\in \R^n \times[0,T]$, where $C_{i,j,T}$ is another positive constant determined by the same parameters listed above.

In what follows, we shall seek a sequence of the interior correctors for the homogenization problem \eqref{eq:HJ}. The first order interior corrector $w_1$ will be in the form of 
\begin{equation}\label{eq:w1}
w_1 (x,t,y) = \phi_1(x,t,y) + \bar{u}_1(x,t),
\end{equation}
where $\phi_1$ denotes 
\begin{equation}\label{eq:phi1}
\phi_1(x,t,y) = w(D_x \bar{u}_0(x,t),y),
\end{equation}
with $w = w(p,y)$ being the periodic (viscosity) solution of \eqref{eq:w-pde} normalized so as to satisfy \eqref{eq:w-0}. Here $\bar{u}_1$ is an effective data that is not determined yet. Let us remark that one may choose $\bar{u}_1$ by any regular data, if one stops seeking interior correctors at this step. However, if one would like to go further and construct the second order corrector $w_2$, one needs to select $\bar{u}_1$ specifically by the solution of an effective limit equation, which arises from the solvability condition of $w_2$. 

We will continuously observe such a relationship between the consecutive correctors. In fact, the $k$-th order interior corrector $w_k$, for $k\geq 2$, will be in the form of
\begin{equation}\label{eq:wk}
w_k (x,t,y) = \phi_k(x,t,y) + \chi(x,t,y)\cdot D_x\bar{u}_{k-1}(x,t) + \bar{u}_k(x,t),
\end{equation}
where $\phi_k(x,t,\cdot)$ will be the periodic viscosity solution of a certain cell problem normalized so as to satisfy $\phi_k(x,t,0) = 0$, and $\chi:\R^n\times[0,\infty)\times\R^n\ra\R^n$ will be defined by 
\begin{equation}\label{eq:chi}
\chi(x,t,y) = v( D_x\bar{u}_0(x,t),y),
\end{equation}
with $v=v(p,y)$ being the periodic solution of \eqref{eq:v-pde} normalized so as to satisfy \eqref{eq:v-0}. Here $\bar{u}_{k-1}$ will be determined specifically such that the cell problem for $\phi_k$ is solvable, while $\bar{u}_k$ will be ``free'' to choose before one tries to construct the $(k+1)$-th corrector $w_{k+1}$. 

It is noteworthy that, owing to Lemma \ref{lemma:w-Ck-C2a}, we have $\chi \in C^\infty(\R^n\times[0,\infty);C^{2,\mu}(\R^n))$ and, for any $i,j=0,1,2,\cdots$ and any $T>0$,
\begin{equation}\label{eq:chi-Ck-C2a}
\norm{ D_x^i \p_t^j \chi(x,t,\cdot)}_{C^{2,\mu}(\R^n)} \leq C_{i,j,R,T},
\end{equation}
uniformly for all $(x,t)\in \R^n \times[0,T]$. In addition, we know from \eqref{eq:v-pde} and \eqref{eq:v-0} that for each $(x,t)\in\R^n\times(0,\infty)$, $\chi(x,t,\cdot)$ is the unique periodic viscosity solution of 
\begin{equation}\label{eq:chi-pde}
-\tr(A(y) D_y^2 \chi) + B(x,t,y)\cdot( D_y\chi + I) = \bar{B}(x,t)\quad\text{in }\R^n,
\end{equation}
which also satisfies 
\begin{equation}\label{eq:chi-0}
\chi(x,t,0) = 0. 
\end{equation}

For the rest of this section, we will justify the existence of the higher order interior correctors in a rigorous way. The corresponding work has been done by the authors in \cite{KL1} and \cite{KL2} in the framework of fully nonlinear, uniformly elliptic, second order PDEs in non-divergence form. 

To simplify the notation, let us write 
\begin{equation}\label{eq:w0}
w_0(x,t,y) = \bar{u}_0(x,t),
\end{equation}
and by $W_k$, for $k=0,1,2,\cdots$, the vector-valued mapping,
\begin{equation}\label{eq:Wk}
W_k (x,t,y) = D_y w_{k+1} (x,t,y) + D_x w_k(x,t,y).
\end{equation}
Note from \eqref{eq:w1}, \eqref{eq:phi1} and \eqref{eq:w0} that
\begin{equation}\label{eq:W0}
W_0 (x,t,y) = D_y \phi_1 (x,t,y) + D_x \bar{u}_0(x,t). 
\end{equation}

We shall also write by $B_k$, for $k=1,2,\cdots$, the mapping, 
\begin{equation}\label{eq:Bk}
B_k(x,t,y) = D_p^k H(W_0(x,t,y)),
\end{equation}
where $D_p^k H$ is understood in the sense of Fr\'echet derivatives, and to make the notation coherent to the notation of $\bar{B}$, we will write
\begin{equation}\label{eq:B}
B (x,t,y) = B_1(x,t,y).
\end{equation}
Let us also remark that, due to \eqref{eq:H-Ck-C01}, \eqref{eq:ub0-Ck} and \eqref{eq:w-Ck-C2a}, we have  $B_k\in C^\infty(\R^n\times[0,\infty);C^\mu(\R^n))$, for any $0<\mu<1$. In particular, we obtain, for any $i,j=0,1,2,\cdots$, any $k=1,2,\cdots$ and any $T>0$,
\begin{equation}\label{eq:Bk-Ck-Ca}
\norm{ D_x^i \p_t^j B_k(x,t,\cdot)}_{C^\mu(\R^n)} \leq C_{i,j,k,T},
\end{equation}
uniformly for all $(x,t)\in \R^n \times[0,T]$, where $C_{i,j,k,T}>0$ depends only on $n$, $\lambda$, $\Lambda$, $\alpha$, $\alpha'$, $\beta$, $\beta'$, $K$, $L$, $\mu$, $i$, $j$, $k$ and $T$. 

\begin{lemma}\label{lemma:wk} Suppose that $A$, $H$ and $g$ satisfy \eqref{eq:A-peri} -- \eqref{eq:A-C01}, \eqref{eq:H-peri} -- \eqref{eq:H-Ck-C01} and, respectively, \eqref{eq:g-convex} -- \eqref{eq:DHDg-0}. Then there exists a sequence $\{w_k\}_{k=1}^\infty$ satisfying the following.
\begin{enumerate}[(i)]
\item $w_k \in C^\infty(\R^n\times[0,\infty); C^{2,\mu}(\R^n))$, for any $0<\mu<1$, and 
\begin{equation}\label{eq:wk-Ck-C2a}
\norm{ D_x^i \p_t^j w_k(x,t,\cdot)}_{C^{2,\mu}(\R^n)} \leq C_{i,j,k,T},
\end{equation}
for each $i,j=0,1,2,\cdots$, any $T>0$, and uniformly for all $(x,t)\in\R^n\times[0,T]$, where $C_{i,j,k,T}>0$ depends only on $n$, $\lambda$, $\Lambda$, $\alpha$, $\alpha'$, $\beta$, $\beta'$, $K$, $L$, $\mu$, $i$, $j$, $k$ and $T$. 
\item $w_k$ satisfies
\begin{equation}\label{eq:wk-0}
w_k(x,0,0) = 0.
\end{equation}
\item For each $(x,t)\in\R^n\times(0,\infty)$, $w_k(x,t,\cdot)$ is a periodic solution of
\begin{equation}\label{eq:w1-pde}
\p_t w_0(x,t,y) - \tr(A(y) D_y^2 w_1) + H( D_y w_1 + D_x w_0(x,t,y),y ) = 0 \quad\text{in }\R^n,
\end{equation}
for $k=1$, and 
\begin{equation}\label{eq:wk-pde}
\begin{split}
&\p_t w_{k-1}(x,t,y) - \tr(A(y) D_y^2 w_k) \\
&+ B(x,t,y) \cdot (D_y w_k + D_x w_{k-1}(x,t,y)) + \Phi_{k-1}(x,t,y) = 0 \quad\text{in }\R^n,  
\end{split}
\end{equation}
for $k\geq 2$, where 
\begin{equation}\label{eq:Phik}
\begin{split}
\Phi_{k-1} (x,t,y) &= -2\tr (A(y) (D_xD_y w_{k-1} (x,t,y) + D_x^2 w_{k-2} (x,t,y))) \\
&\quad + \sum_{l=2}^{k-1} \frac{1}{l!} \sum_{\substack{ i_1 + \cdots + i_l = k-1 \\ i_1,\cdots,i_1\geq 1}} B_l (x,t,y) ( W_{i_1}(x,t,y),\cdots, W_{i_l} (x,t,y)),
\end{split}
\end{equation}
with the last summation term understood as zero when $k=2$. 
\end{enumerate}
\end{lemma}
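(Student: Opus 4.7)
The plan is to proceed by induction on $k$, producing $w_k$ in the ansatz form \eqref{eq:wk} while simultaneously fixing the previously ``free'' datum $\bar u_{k-1}$ through the Fredholm solvability condition of the cell problem that defines $\phi_k$. For the base case $k=1$ I set $\phi_1(x,t,y):=w(D_x\bar u_0(x,t),y)$ using the corrector $w$ from Lemma~\ref{lemma:cell}, and take $\bar u_1\equiv 0$ provisionally. The regularity \eqref{eq:wk-Ck-C2a} at $k=1$ follows by combining Lemma~\ref{lemma:w-Ck-C2a} with the $C^\infty$-smoothness of $\bar u_0$ furnished by the characteristic representation \eqref{eq:char}, while \eqref{eq:w1-pde} is a direct consequence of the cell problem \eqref{eq:w-pde} evaluated at $p=D_x\bar u_0$ together with the effective Hamilton--Jacobi equation \eqref{eq:eff-HJ}. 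The normalization $w_1(x,0,0)=0$ is immediate from \eqref{eq:w-0}.

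For the inductive step, assume $w_1,\dots,w_{k-1}$ have been constructed with $\bar u_1,\dots,\bar u_{k-2}$ already fixed, and substitute the ansatz \eqref{eq:wk} for $w_k$ into \eqref{eq:wk-pde}. The term $\chi\cdot D_x\bar u_{k-1}$ in $w_k$ is absorbed via the $\chi$-equation \eqref{eq:chi-pde}, and the resulting identity takes the form
\begin{equation*}
-\tr\bigl(A(y)D_y^2\phi_k\bigr)+B(x,t,y)\cdot D_y\phi_k\;=\;F_{k-1}(x,t,y)-\p_t\bar u_{k-1}(x,t)-B(x,t,y)\cdot D_x\bar u_{k-1}(x,t),
\end{equation*}
where $F_{k-1}$ is an explicit polynomial in $\phi_1,\dots,\phi_{k-1},\chi,\bar u_0,\dots,\bar u_{k-2}$ and their derivatives that lies in $C^\infty(\R^n\times[0,\infty);C^\mu(\R^n))$ by the inductive hypothesis and \eqref{eq:Bk-Ck-Ca}. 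The crucial structural point is that $\bar u_{k-1}$ enters \eqref{eq:wk-pde} only through $\p_t\bar u_{k-1}$ and $D_x\bar u_{k-1}$, since $\Phi_{k-1}$ depends on $w_{k-1}$ only through $y$-derivatives of $\phi_{k-1}$ and through $D_x^2 w_{k-2}$, neither of which involves $\bar u_{k-1}$.

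The linear operator $L_{x,t}:=-\tr(A D_y^2\cdot)+B(x,t,\cdot)\cdot D_y$ has a one-dimensional cokernel on periodic functions (the content of Lemma~\ref{lemma:cell-lin} together with the standard $\delta$-penalization/compactness argument used in its proof), with solvability characterized by vanishing of the $y$-average against the unique invariant probability measure $\mu_{x,t}$ of the adjoint $L_{x,t}^*$. Integrating the $\chi$-equation \eqref{eq:chi-pde} against $\mu_{x,t}$ and using $L_{x,t}^*\mu_{x,t}=0$ yields the identity $\langle B(x,t,\cdot)\rangle_{\mu_{x,t}}=\bar B(x,t)$. Therefore, the compatibility condition for the cell equation above reduces to the first-order linear transport equation
\begin{equation*}
\p_t\bar u_{k-1}(x,t)+\bar B(x,t)\cdot D_x\bar u_{k-1}(x,t)\;=\;\bigl\langle F_{k-1}(x,t,\cdot)\bigr\rangle_{\mu_{x,t}},
\end{equation*}
whose characteristics are the straight lines \eqref{eq:char}. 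By assumptions \eqref{eq:char-inj}--\eqref{eq:char-surj} these characteristics foliate $\R^n\times[0,\infty)$ bijectively, by \eqref{eq:DHDg-0} they are non-stationary, and the right-hand side is smooth; integrating along them with the initial datum $\bar u_{k-1}(x,0)=0$ (required by the normalization $w_{k-1}(x,0,0)=0$) determines $\bar u_{k-1}\in C^\infty(\R^n\times[0,\infty))$ uniquely.

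With $\bar u_{k-1}$ fixed, Lemma~\ref{lemma:cell-lin} then supplies a unique periodic $\phi_k(x,t,\cdot)\in C^{2,\mu}(\R^n)$ with $\phi_k(x,t,0)=0$, and the $(x,t)$-smoothness of $\phi_k$ together with the estimate \eqref{eq:wk-Ck-C2a} follows by differentiating the cell equation in $(x,t)$ and iterating the regularity scheme of Lemma~\ref{lemma:w-Ck-C2a}. Setting $\bar u_k\equiv 0$ provisionally (to be updated at step $k+1$) enforces $w_k(x,0,0)=0$ in view of \eqref{eq:chi-0} and $\bar u_{k-1}(x,0)=0$, completing the induction. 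I expect the main obstacle to be the combinatorial bookkeeping required to isolate $\bar u_{k-1}$ cleanly inside \eqref{eq:wk-pde}: one must carefully unpack the multi-index sum defining $\Phi_{k-1}$ and verify that $\bar u_{k-1}$ does not re-enter through any $W_{i_j}$, so that the compatibility condition genuinely reduces to a transport equation in $\bar u_{k-1}$ alone. This is the concrete manifestation of the linearization-plus-decoupling phenomenon highlighted in the introduction.
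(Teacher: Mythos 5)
Your argument is correct and follows the same overall inductive structure as the paper: the ansatz $w_k=\phi_k+\chi\cdot D_x\bar u_{k-1}+\bar u_k$, the role of $\chi$ in absorbing the $D_x\bar u_{k-1}$ dependence, the solvability of a cell problem for $\phi_k$ forcing a first-order transport equation for $\bar u_{k-1}$ along the effective characteristics \eqref{eq:char}, and the regularity via Lemma \ref{lemma:w-Ck-C2a}. The one substantive difference is how you identify the ergodic constant and the compatibility condition. The paper leans on Lemma \ref{lemma:cell-lin}, whose proof (like that of Lemma \ref{lemma:cell}) runs through the $\delta$-penalized problem and the limit $-\delta v^\delta\to\bar f_k$; you instead pair the cell equation against the invariant probability measure $\mu_{x,t}$ of the adjoint of $L_{x,t}=-\tr(AD_y^2\cdot)+B(x,t,\cdot)\cdot D_y$, and in particular extract the identity $\langle B(x,t,\cdot)\rangle_{\mu_{x,t}}=\bar B(x,t)$ from \eqref{eq:chi-pde}. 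The invariant-measure route makes the emergence of $\bar B$ as the transport speed more transparent, while the penalization route delivers the constant, its uniqueness, and the $C^{2,\mu}$ estimate for $\phi_{k+1}$ in one stroke without invoking the adjoint; the two are equivalent and both standard. You also reindex the induction (fixing $\bar u_{k-1}$ at step $k$, where the paper fixes $\bar u_k$ at step $k$ via $\tilde w_k\mapsto\tilde w_{k+1}$), which is purely cosmetic. One small bookkeeping slip: once $\chi\cdot D_x\bar u_{k-1}$ is absorbed via \eqref{eq:chi-pde}, the coefficient surviving on $D_x\bar u_{k-1}$ is already $\bar B(x,t)$, not $B(x,t,y)$, so your displayed intermediate cell equation should carry $-\bar B\cdot D_x\bar u_{k-1}$ on the right; this does not change the resulting transport equation since averaging against $\mu_{x,t}$ produces $\bar B$ either way, but it is needed to keep the claim that $F_{k-1}$ is independent of $\bar u_{k-1}$ consistent.
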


\begin{remark}\label{remark:wk} The summation term in the definition \eqref{eq:Phik} of $\Phi_k$ amounts to the nonlinear effect of the Hamiltonian $H$ in $p$. In view of \eqref{eq:Bk}, one may easily observe that the whole summation term becomes zero when $H$ is linear in $p$. The choice of $\Phi_k$ is specifically designed to achieve \eqref{eq:HDetame}, which will eventually leads us to the higher order convergence rate for the homogenization problem \eqref{eq:HJ}. We will also see later in \eqref{eq:Phik-nl} and \eqref{eq:HDetame-nl} that the choice of $\Phi_k$ changes according to the type of nonlinearity that needs to be taken care of.
\end{remark}

\begin{proof}[Proof of Lemma \ref{lemma:wk}] Throughout this proof, we shall fix $0<\mu<1$, and denote by $C_{*,\cdots,*}$ a positive constant depending only on the subscripts as well as the parameters $n$, $\lambda$, $\Lambda$, $\alpha$, $\alpha'$, $\beta$, $\beta'$, $K$, $L$ and $\mu$. We will also allow it to vary from one line to another, for notational convenience. 

Define $\phi_1$ by \eqref{eq:phi1}. Since $\bar{u}_0\in C^\infty(\R^n\times[0,\infty))$, we know from \eqref{eq:w-Ck-C2a} that $\phi_1 \in C^\infty(\R^n\times[0,\infty);C^{2,\mu}(\R^n))$. Moreover, it follows from \eqref{eq:ub0-Ck} that for each $i,j=0,1,2,\cdots$, and any $T>0$, 
\begin{equation}\label{eq:phi1-Ck-C2a}
\norm{ D_x^i\p_t^j \phi_1(x,t,\cdot) }_{C^{2,\mu}(\R^n)} \leq C_{i,j,T},  
\end{equation}
uniformly for all $(x,t)\in\R^n\times[0,T]$. In view of the definition of $w_0$ in \eqref{eq:w0}, $\phi_1(x,t,\cdot)$ is a periodic viscosity solution of \eqref{eq:w1-pde}, for each $(x,t)\in\R^n\times(0,\infty)$, as $\bar{u}_0$ being the solution of \eqref{eq:eff-HJ}.

Let us now fix $k\geq 1$ and suppose that we have already found $\{w_l\}_{l=0}^{k-1}$ that satisfies the assertions (i) and (ii) of this lemma. Moreover, assume that we have already obtained $\bar{u}_{k-1} \in C^\infty(\R^n\times[0,\infty))$ such that 
\begin{equation}\label{eq:ubk-1-Ck}
\left| D_x^i \p_t^j \bar{u}_{k-1} (x,t) \right| \leq C_{i,j,k,T},
\end{equation}
for any $i,j=0,1,2,\cdots$, any $T>0$ and any $(x,t)\in \R^n\times[0,T]$. Additionally, suppose that we have also found $\phi_k\in C^\infty(\R^n\times[0,\infty);C^{2,\mu}(\R^n))$ such that for each $(x,t)\in\R^n\times[0,\infty)$, $\phi_k (x,t,\cdot)$ is a periodic function normalized by 
\begin{equation}\label{eq:phik-0} 
\phi_k(x,t,0) = 0, 
\end{equation}
and that we have, for any $i,j=0,1,2,\cdots$ and any $T>0$, 
\begin{equation}\label{eq:phik-Ck-C2a}
\norm{ D_x^i \p_t^j \phi_k(x,t,\cdot)}_{C^{2,\mu}(\R^n)} \leq C_{i,j,k,T},
\end{equation}
uniformly for all $(x,t)\in \R^n \times[0,T]$. 

Define $\tilde{w}_k$ by 
\begin{equation}\label{eq:wt1}
\tilde{w}_1 (x,t,y) = \phi_1(x,t,y),
\end{equation}
if $k=1$, and by 
\begin{equation}\label{eq:wtk}
\tilde{w}_k (x,t,y) = \phi_k(x,t,y) + \chi(x,t,y) \cdot D_x \bar{u}_{k-1}(x,t),
\end{equation}
if $k\geq 2$. We deduce from \eqref{eq:ubk-1-Ck}, \eqref{eq:phik-Ck-C2a} and \eqref{eq:chi-Ck-C2a} that $\tilde{w}_k \in C^\infty(\R^n\times[0,\infty);C^{2,\mu}(\R^n))$ and satisfies 
\begin{equation}\label{eq:wtk-Ck-C2a}
\norm{ D_x^i \p_t^j \tilde{w}_k(x,t,\cdot)}_{C^{2,\mu}(\R^n)} \leq C_{i,j,k,T},
\end{equation}
for any $i,j=0,1,2,\cdots$, any $T>0$ and any $(x,t)\in \R^n \times[0,T]$. 

In view of the estimate \eqref{eq:wtk-Ck-C2a}, we observe that $\tilde{w}_k$ satisfies the assertion (i) of Lemma \ref{lemma:wk}. Moreover, it follows from the hypothesis \eqref{eq:phik-0}, and the fact \eqref{eq:chi-0} that $\tilde{w}_k$ verifies the assertion (ii) of this lemma as well. Henceforth, we shall assume, as the last hypothesis for this induction argument, that $\tilde{w}_k$ satisfies the assertion (iii) of this lemma.

In order to find $\bar{u}_k$, we first define
\begin{equation}\label{eq:fk}
\begin{split}
f_k (x,t,y) &= \p_t \tilde{w}_k (x,t,y) + B(x,t,y)\cdot D_x \tilde{w}_k(x,t,y)  \\
&\quad -2\tr ( A(y) (D_xD_y \tilde{w}_k (x,t,y) + D_x^2 w_{k-1} (x,t,y))) \\
&\quad + \sum_{l=2}^k \frac{1}{l!} \sum_{\substack{ i_1 + \cdots + i_l = k \\ i_1,\cdots,i_1\geq 1}} B_l (x,t,y) ( W_{i_1}(x,t,y),\cdots, W_{i_l} (x,t,y)). 
\end{split}
\end{equation}
Using \eqref{eq:A-C01}, \eqref{eq:Bk-Ck-Ca}, \eqref{eq:ubk-1-Ck}, \eqref{eq:phik-Ck-C2a}, \eqref{eq:chi-Ck-C2a} and \eqref{eq:wtk-Ck-C2a} together with the induction hypothesis \eqref{eq:wk-Ck-C2a}, we deduce that $f_k \in C^\infty(\R^n\times[0,\infty);C^\mu(\R^n))$ and  
\begin{equation}\label{eq:fk-Ck-Ca}
\norm{ D_x^i \p_t^j f_k(x,t,\cdot)}_{C^\mu(\R^n)} \leq C_{i,j,k,T},
\end{equation}
for any $i,j=0,1,2,\cdots$, any $T>0$ and any $(x,t)\in \R^n \times[0,T]$. 

Now that $f_k$ is periodic in $y$, we may consider the following cell problem: there exists a unique function $\bar{f}_k:\R^n\times(0,\infty)\ra\R$ such that for each $(x,t)\in\R^n\times[0,\infty)$, the PDE,
\begin{equation}\label{eq:phik-pde}
-\tr (A(y) D_y^2 \phi_{k+1}) + B(x,t,y) \cdot D_y \phi_{k+1} + f_k(x,t,y) = \bar{f}_k (x,t) \quad\text{in }\R^n,
\end{equation}
has a periodic viscosity solution. Following the argument in the proof of Lemma \ref{lemma:cell-lin}, we see that the cell problem \eqref{eq:phik-pde} is solvable. Moreover, if we normalize $\phi_{k+1}$ so as to satisfy 
\begin{equation}\label{eq:phik+1-0}
\phi_{k+1}(x,t,0) = 0,
\end{equation}
such a periodic viscosity solution $\phi_{k+1}$ is unique. Furthermore, applying the regularity theory in the slow variable established in Lemma \ref{lemma:w-Ck-C2a}, we deduce from \eqref{eq:Bk-Ck-Ca} and \eqref{eq:fk-Ck-Ca} that $\bar{f}_k \in C^\infty(\R^n\times(0,\infty))$ and $\phi_{k+1} \in C^\infty(\R^n\times[0,\infty);C^{2,\mu}(\R^n))$. In particular, we have, for any $i,j=0,1,2,\cdots$ and any $T>0$, 
\begin{equation}\label{eq:fbk-phik-Ck-C2a}
\left| D_x^i \p_t^j \bar{f}_k (x,t) \right| + \norm{ D_x^i \p_t^j \phi_{k+1}(x,t,\cdot)}_{C^{2,\mu}(\R^n)} \leq C_{i,j,k,T},
\end{equation}
uniformly for all $(x,t)\in \R^n \times[0,T]$.

With $\bar{f}_k$ at hand, we consider the first order linear PDE,
\begin{equation}\label{eq:ubk-pde}
\begin{cases}
\p_t \bar{u}_k + \bar{B}(x,t)\cdot D_x \bar{u}_k + \bar{f}_k(x,t) =0 & \text{in }\R^n\times(0,\infty),\\
\bar{u}_k = 0 & \text{on }\R^n\times\{t=0\},
\end{cases}
\end{equation}
where $\bar{B}$ is defined by \eqref{eq:Bb}. Recall from \eqref{eq:DHDg-0} that $\bar{B}$ vanishes nowhere in $\R^n\times(0,\infty)$. Thus, it follows from the classical existence theory for the first order linear PDE that there exists a unique solution $\bar{u}_k \in C^\infty(\R^n\times[0,\infty))$ of \eqref{eq:ubk-pde} such that 
\begin{equation}\label{eq:ubk-Ck}
\left| D_x^i \p_t^j \bar{u}_k (x,t) \right| \leq C_{i,j,k,T},
\end{equation}
for any $i,j=0,1,2,\cdots$, any $T>0$ and any $(x,t)\in \R^n\times[0,T]$. 

Define $w_k$ by 
\begin{equation}\label{eq:wk-re}
w_k(x,t,y) = \tilde{w}_k(x,t,y) + \bar{u}_k(x,t),
\end{equation}
which coincides with the expression \eqref{eq:w1} and \eqref{eq:wk} for any $k\geq 1$. Using \eqref{eq:wtk-Ck-C2a} and \eqref{eq:ubk-Ck}, we see that $w_k$, defined by \eqref{eq:wk-re}, verifies the assertions (i) and (ii) of Lemma \ref{lemma:wk}. Besides, let us notice that 
\begin{equation}\label{eq:fk-re}
f_k(x,t,y) = \p_t \tilde{w}_k(x,t,y) + B(x,t,y)\cdot D_x \tilde{w}_k(x,t,y) + \Phi_k(x,t,y),
\end{equation}
where $\Phi_k$ is defined by \eqref{eq:Phik}, since we have $D_y \tilde{w}_k (x,t,y) = D_y w_k(x,t,y)$. 

To this end, let us set $\tilde{w}_{k+1}$ by 
\begin{equation}\label{eq:wtk+1}
\tilde{w}_{k+1} (x,t,y) = \phi_{k+1}(x,t,y) + \chi(x,t,y) \cdot D_x \bar{u}_k(x,t).
\end{equation}
Then we observe from \eqref{eq:phik-pde}, \eqref{eq:ubk-pde}, \eqref{eq:chi-pde} and \eqref{eq:fk-re} that
\begin{equation}
\begin{split}
& \p_t w_k(x,t,y) - \tr( A(y) D_y^2 \tilde{w}_{k+1}(x,t,y)) \\
& + B(x,t,y)\cdot( D_y \tilde{w}_{k+1}(x,t,y) + D_x w_k(x,t,y) ) + \Phi_k(x,t,y) \\
& =  \p_t\bar{u}_k(x,t)  - \tr (A(y) D_y^2 \phi_{k+1}(x,t,y)) + B(x,t,y)\cdot \phi_{k+1} + f_k(x,t,y)  \\
&\quad + (- \tr( A(y) D_y^2 \chi(x,t,y)) + B(x,t,y) \cdot (D_y\chi(x,t,y) + I))\cdot D_x\bar{u}_k(x,t) \\
& =  \p_t\bar{u}_k(x,t)  +  \bar{f}_k(x,t) + \bar{B}(x,t)\cdot D_x\bar{u}_k(x,t) \\
& = 0.
\end{split}
\end{equation}
Hence, we have proved that $\tilde{w}_{k+1}$ satisfies the assertion (iii) of Lemma \ref{lemma:wk}. 

Recall that we have started with $\{w_l\}_{k=0}^{k-1}$, $\bar{u}_{k-1}$, $\phi_k$ and $\tilde{w}_k$, and obtained $w_k$, $\bar{u}_k$, $\phi_{k+1}$ and $\tilde{w}_{k+1}$ that satisfy all the induction hypotheses. Moreover, we have established the initial case for the induction hypotheses in the beginning of this proof. Thus, the proof is completed by the induction principle.
\end{proof}

We shall call $w_k$, chosen from Lemma \ref{lemma:wk}, the $k$-th order interior corrector for the homogenization problem \eqref{eq:HJ}, due to the following lemma. Although the computation involved in the proof below is similar to what can be found in \cite[Section 3.3]{KL1} and \cite[Section 4.1]{KL2}, we present it in detail for the sake of completeness. 

\begin{lemma}\label{lemma:wk-cor}
Let $\{w_k\}_{k=1}^\infty$ be chosen as in Lemma \ref{lemma:wk}. Then for each integer $m\geq 1$ and each $0<\e\leq\frac{1}{2}$, the function $\eta_m^\e$, defined by  
\begin{equation}\label{eq:etame}
\eta_m^\e (x,t) = \bar{u}_0(x,t) + \sum_{k=1}^m \e^k w_k\left(x,t,\frac{x}{\e}\right),
\end{equation}
is a viscosity solution of 
\begin{equation}\label{eq:etame-pde}
\begin{dcases}
\p_t \eta_m^\e -\e \tr \left( A\left(\frac{x}{\e}\right) D^2 \eta_m^\e\right)  + H\left( D \eta_m^\e,\frac{x}{\e}\right) = \psi_m^\e\left(x,t,\frac{x}{\e}\right) & \text{in }\R^n\times(0,\infty),\\
\eta_m^\e = g & \text{on }\R^n\times\{t=0\},
\end{dcases}
\end{equation}
where $\psi_m^\e \in C(\R^n\times[0,\infty);L^\infty(\R^n))$ satisfies, for any $T>0$, 
\begin{equation}\label{eq:psime-Linf}
\norm{\psi_m^\e(x,t,\cdot)}_{L^\infty(\R^n)} \leq C_{m,T}\e^m,
\end{equation}
uniformly for all $(x,t)\in \R^n\times[0,T]$, where $C_{m,T}>0$ is a constant depending only on $n$, $\lambda$, $\Lambda$, $\alpha$, $\alpha'$, $\beta$, $\beta'$, $K$, $L$, $\mu$, $m$ and $T$. 
\end{lemma}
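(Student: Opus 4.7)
The plan is to substitute $\eta_m^\e$ into the left-hand side of \eqref{eq:HJ}, expand using the chain rule (treating $y = x/\e$ as independent from the slow variables $(x,t)$), and collect the resulting expression as a formal series in $\e$. By construction of $w_k$ in Lemma \ref{lemma:wk}, every coefficient of $\e^k$ for $0 \leq k \leq m-1$ should vanish identically, and the remaining terms of order $\e^m$ or higher will form $\psi_m^\e$; the required uniform bound then comes from the estimates \eqref{eq:wk-Ck-C2a} and \eqref{eq:Bk-Ck-Ca}.

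Concretely, the chain rule gives
\begin{equation*}
\p_t \eta_m^\e = \p_t w_0 + \sum_{k=1}^m \e^k (\p_t w_k)\left(x,t,\tfrac{x}{\e}\right),
\end{equation*}
\begin{equation*}
D\eta_m^\e = W_0\left(x,t,\tfrac{x}{\e}\right) + \sum_{k=1}^{m-1} \e^k W_k\left(x,t,\tfrac{x}{\e}\right) + \e^m D_x w_m\left(x,t,\tfrac{x}{\e}\right),
\end{equation*}
together with an analogous expansion for $D^2 \eta_m^\e$ whose lowest-order contribution is $\e^{-1} D_y^2 w_1$ (which pairs with the prefactor $-\e$ in the diffusion term to produce an $O(1)$ contribution). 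Writing $Z = D\eta_m^\e - W_0$, so that $|Z| \leq C\e$ by Lemma \ref{lemma:wk}(i), Taylor's theorem applied to $p \mapsto H(p,y)$ at $p = W_0$ yields
\begin{equation*}
H(D\eta_m^\e, y) = H(W_0, y) + \sum_{l=1}^m \frac{1}{l!} B_l(x,t,y)(Z)^{(l)} + R_m(x,t,y),
\end{equation*}
where $|R_m| \leq C|Z|^{m+1} \leq C\e^{m+1}$ by \eqref{eq:H-Ck-C01} (note that $D_p^{m+1} H$ is uniformly bounded whenever $m \geq 1$, since $(2-(m+1))_+ = 0$). Expanding each $(Z)^{(l)}$ by the multinomial formula then reorganizes every Hamiltonian contribution by its total power of $\e$.

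Matching orders: the $\e^0$ coefficient is exactly the left-hand side of \eqref{eq:w1-pde}, hence zero. For each $1 \leq k \leq m-1$, the $\e^k$ coefficient assembles the linear piece $\p_t w_k - \tr(A D_y^2 w_{k+1}) + B \cdot (D_y w_{k+1} + D_x w_k)$ from the time derivative, the diffusion, and the $l=1$ Taylor term, plus the cross-derivative contribution $-2\tr(A(D_xD_y w_k + D_x^2 w_{k-1}))$ from the remaining diffusion terms, plus the multinomial contributions $\frac{1}{l!} B_l(W_{i_1}, \ldots, W_{i_l})$ over $i_1+\cdots+i_l = k$ with $l \geq 2$; by the very definition of $\Phi_k$ in \eqref{eq:Phik}, this matches the left-hand side of \eqref{eq:wk-pde} with index $k+1$, which vanishes by Lemma \ref{lemma:wk}(iii). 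All leftover contributions -- namely $\e^m \p_t w_m$, the $\e^m$ and $\e^{m+1}$ terms from $-\e\tr(A D^2 \eta_m^\e)$ involving $D_xD_y w_m$, $D_x^2 w_{m-1}$ and $D_x^2 w_m$, the Hamiltonian multinomial terms with total $\e$-power $\geq m$, and the Taylor remainder $R_m$ -- are each uniformly $O(\e^m)$ on $\R^n \times [0,T]$ thanks to \eqref{eq:A-ellip}, \eqref{eq:wk-Ck-C2a} and \eqref{eq:Bk-Ck-Ca}. Defining $\psi_m^\e(x,t,y)$ to be the sum of these residuals (as a function of the three variables $(x,t,y)$ with $y$ periodic) immediately gives the bound \eqref{eq:psime-Linf}, and the continuity $\psi_m^\e \in C(\R^n\times[0,\infty);L^\infty(\R^n))$ follows from the smoothness asserted in Lemma \ref{lemma:wk}(i). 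The main obstacle, and the place that requires care, is the combinatorial bookkeeping: one must verify that the multinomial expansion of $(Z)^{(l)}$ aligns, order by order in $\e$, with the summation prescribed in \eqref{eq:Phik}, so that the $\e^k$ coefficient coincides exactly with the PDE \eqref{eq:wk-pde} satisfied by $w_{k+1}$.
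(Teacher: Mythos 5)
Your proposal is correct and follows essentially the same route as the paper's proof of this lemma: substitute $\eta_m^\e$ into the equation, Taylor-expand $H$ in $p$ about $W_0$, reorganize the result as a power series in $\e$, and match each coefficient $\e^k$ for $0\leq k\leq m-1$ against the cell problem \eqref{eq:w1-pde} or \eqref{eq:wk-pde} from Lemma \ref{lemma:wk}. The only cosmetic difference is that you carry the Taylor expansion of $H$ to order $m$ (so your remainder is $O(\e^{m+1})$) while the paper stops at order $m-1$ and bounds its remainder $R_{m-1}$ directly by $O(\e^m)$ using \eqref{eq:H-Ck-C01}; both choices work because the multinomial contributions of total $\e$-power $\geq m$ are absorbed into $\psi_m^\e$ in either case.
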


\begin{proof} Aligned with the notation \eqref{eq:Wk} of $W_k$, let us denote by $X_k$, the matrix-valued mapping, 
\begin{equation}\label{eq:Xk}
X_k(x,t,y) = D_y^2 w_{k+1}(x,t,y) + (D_xD_y + D_yD_x) w_k(x,t,y) + D_x^2 w_{k-1}(x,t,y),
\end{equation}
for $k=1,2,\cdots$, with $w_{-1}$ being understood as the identically zero function. One may notice from \eqref{eq:phi1}, \eqref{eq:w1} and \eqref{eq:w0} that
\begin{equation}\label{eq:X0}
X_0(x,t,y) = D_y^2 \phi_1(x,t,y).
\end{equation}

Fix $m\geq 1$ and $0<\e\leq\frac{1}{2}$. For the moment, we shall replace $w_{m+1}$ and $w_{m+2}$ by the identically zero functions, only to simplify the exposition. With this replacement, we have $W_m = D_x w_m$, $X_m = (D_xD_y + D_yD_x) w_m + D_x^2 w_{m-1}$ and $X_{m+1} = D_x^2 w_m$.

In view of \eqref{eq:Wk} and \eqref{eq:Xk}, we have
\begin{equation*}
D \eta_m^\e (x,t) = \sum_{k=0}^m \e^k W_k\left(x,t,\frac{x}{\e}\right),
\end{equation*}
and
\begin{equation*}
\e D^2 \eta_m^\e (x,t) = \sum_{k=0}^{m+1} \e^k X_k\left(x,t,\frac{x}{\e}\right).
\end{equation*}
Let us define $\Psi_k$ by
\begin{equation}\label{eq:Psi0}
\Psi_0(x,t,y) = - \tr (A(y) X_0(x,t,y)) + H(W_0(x,t,y),y),
\end{equation}
if $k=0$, and by
\begin{equation}\label{eq:Psik}
\begin{split}
\Psi_k(x,t,y) &= - \tr (A(y) X_k(x,t,y)) \\
&\quad + \sum_{l=1}^k \frac{1}{l!} \sum_{\substack{i_1+\cdots+i_l=k \\ i_1,\cdots,i_l\geq 1}} B_l(x,t,y)(W_{i_1}(x,t,y),\cdots,W_{i_l}(x,t,y)),
\end{split}
\end{equation}
if $1\leq k\leq m-1$. Using $\Psi_k$, one may rephrase the PDEs \eqref{eq:w1-pde} and \eqref{eq:wk-pde} 
\begin{equation}\label{eq:wk-pde-re}
\p_t w_k (x,t,y) + \Psi_k (x,t,y) = 0 \quad\text{in }\R^n,
\end{equation}
for $0\leq k\leq m-1$. 

Denoting by $T_{m-1}(p_0,p)$ the $(m-1)$-th order Taylor polynomial of $H$ in $p$ at $p_0$, namely, 
\begin{equation*}
T_{m-1}(p_0,p) (y) = \sum_{k=0}^{m-1} \frac{1}{k!} D_p^k H(p_0,y) (p,\cdots,p),
\end{equation*} 
we have
\begin{equation}\label{eq:Tm}
\begin{split}
& T_{m-1} \left( W_0(x,t,y), \sum_{k=1}^m \e^k W_k(x,t,y) \right)(y) - \sum_{k=0}^{m-1} \e^k \tr(A(y) X_k (x,t,y)) \\
& = \sum_{k=0}^{m-1} \e^k \Psi_k(x,t,y)  + \sum_{k=2}^m \sum_{\substack{m\leq i_1+\cdots+i_k\leq km \\ 1\leq i_1,\cdots,i_k\leq m}} \frac{\e^{i_1+\cdots+i_k}}{k!} B_k(x,t,y)(W_{i_1}(x,t,y),\cdots,W_{i_k}(x,t,y)).
\end{split}
\end{equation}
Hence, we apply the Taylor expansion of $H$ in $p$ at $W_0$ up to $(m-1)$-th order and derive that 
\begin{equation}\label{eq:HDetame}
-\e \tr \left( A\left(\frac{x}{\e}\right) D^2 \eta_m^\e(x,t)\right) + H\left( D\eta_m^\e (x,t) ,\frac{x}{\e}\right) = \sum_{k=0}^{m-1} \e^k\Psi_k\left(x,t,\frac{x}{\e}\right) + E_m^\e \left(x,t,\frac{x}{\e}\right),
\end{equation}
where $E_m^\e$ is defined so as to satisfy 
\begin{equation}\label{eq:Eme}
\begin{split}
&E_m^\e (x,t,y) - R_{m-1}\left(W_0(x,t,y),\sum_{k=1}^m\e^k W_k(x,t,y)\right)(y) + \sum_{k=m}^{m+1} \e^k \tr (A(y) X_k(x,t,y)) \\
&=  \sum_{k=2}^m \sum_{\substack{m\leq i_1+\cdots+i_k\leq km \\ 1\leq i_1,\cdots,i_k\leq m}} \frac{\e^{i_1+\cdots+i_k}}{k!} B_k(x,t,y)(W_{i_1}(x,t,y),\cdots,W_{i_k}(x,t,y)),
\end{split}
\end{equation}
with $R_{m-1}(p_0,p)$ being the $(m-1)$-th order remainder term of $H$ in $p$ at $p_0$.

Now using \eqref{eq:wk-pde-re}, we observe that $\eta_m^\e$ solves \eqref{eq:etame-pde} with\begin{equation}\label{eq:psime}
\psi_m^\e(x,t,y) =  \e^m \p_t w_m (x,t,y) + E_m^\e (x,t,y).
\end{equation}
Note that the initial condition of \eqref{eq:etame-pde} is satisfied, due to that of \eqref{eq:eff-HJ} and the assertion (ii) of Lemma \ref{lemma:wk}. Hence, we are only left with proving the estimate \eqref{eq:psime-Linf} for $\psi_m^\e$. 

It is clear that \eqref{eq:wk-Ck-C2a} implies 
\begin{equation}\label{eq:ptwm-Linf}
\norm{ \p_t w_m(x,t,\cdot) }_{L^\infty(\R^n)} \leq C_{m,T},
\end{equation}
for any $T>0$ and any $(x,t)\in \R\times[0,T]$, where $C_{m,T}>0$ is a constant depending only on $n$, $\lambda$, $\Lambda$, $\alpha$, $\alpha'$, $\beta$, $\beta'$, $K$, $L$, $\mu$, $m$ and $T$. On the other hand, using \eqref{eq:H-Ck-C01}, \eqref{eq:w-Ck-C2a} and \eqref{eq:wk-Ck-C2a}, and noting that $\e^{i_1+\cdots+i_k} \leq \e^m$ for any $1\leq i_1,\cdots,i_k\leq m$ satisfying $m\leq i_1+\cdots +i_k\leq km$, we deduce from \eqref{eq:Eme} that
\begin{equation}\label{eq:Eme-Linf}
\norm{ E_m^\e (x,t,\cdot) }_{L^\infty(\R^n)} \leq C_{m,T}\e^m,
\end{equation}
for any $T>0$ and any $(x,t)\in \bar{B}_R\times[0,T]$, with $C_{m,T}>0$ being yet another constant depending only on the same parameters listed above. This finishes the proof. 
\end{proof}

With the aid of Lemma \ref{lemma:wk-cor}, we prove the first main result of this paper.

\begin{theorem}\label{theorem:cr} Suppose that the diffusion coefficient $A$, the Hamiltonian $H$ and the initial data $g$ satisfy \eqref{eq:A-peri} -- \eqref{eq:A-C01}, \eqref{eq:H-peri} -- \eqref{eq:H-Ck-C01}, and respectively \eqref{eq:g-convex} -- \eqref{eq:DHDg-0}. Under these circumstances, let $\{u^\e\}_{\e>0}$ be the sequence of the viscosity solutions of \eqref{eq:HJ}. Then with the viscosity solution $\bar{u}_0$ of \eqref{eq:eff-HJ} and the sequence $\{w_k\}_{k=1}^\infty$ of $k$-th order interior correctors chosen in Lemma \ref{lemma:wk}, we have, for each integer $m\geq 1$, any $0<\e\leq\frac{1}{2}$ and any $T>0$, 
\begin{equation}\label{eq:cr}
\left| u^\e(x,t) - \bar{u}_0(x,t) - \sum_{k=1}^m \e^k w_k \left(x,t,\frac{x}{\e}\right) \right| \leq C_{m,T}\e^m,
\end{equation}
uniformly for all $(x,t)\in\R^n\times [0,T]$, where $C_{m,T}>0$ depends only on $n$, $\lambda$, $\Lambda$, $\alpha$, $\alpha'$, $\beta$, $\beta'$, $K$, $L$, $\mu$, $m$ and $T$. 
\end{theorem}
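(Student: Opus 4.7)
The plan is to turn the residual estimate of Lemma \ref{lemma:wk-cor} into a sup-norm bound on $u^\e - \eta_m^\e$ through a standard comparison/barrier argument for the viscous Hamilton-Jacobi equation. Concretely, Lemma \ref{lemma:wk-cor} asserts that the smooth function $\eta_m^\e$ from \eqref{eq:etame} satisfies \eqref{eq:etame-pde}, i.e.\ the same PDE as $u^\e$ up to a forcing term $\psi_m^\e$ of $L^\infty$-size $C_{m,T}\e^m$ and with the same initial trace $g$. Thus $u^\e$ and $\eta_m^\e$ solve equations that differ by a controlled $O(\e^m)$ residual, and the task is to propagate this into a uniform difference bound.

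To that end I would construct the explicit barriers
$$v_\pm^\e(x,t) \;=\; \eta_m^\e(x,t) \pm K\e^m(1+t),$$
with $K=K_{m,T}$ chosen to dominate the constant in \eqref{eq:psime-Linf}. Plugging into the PDE in \eqref{eq:etame-pde} and noting that only the $\partial_t$ term is affected by adding $\pm K\e^m(1+t)$, one sees that $v_+^\e$ is a classical (hence viscosity) supersolution of \eqref{eq:HJ}, while $v_-^\e$ is a classical subsolution, with initial data dominating, respectively being dominated by, $g$. The comparison principle for \eqref{eq:HJ}, cf.\ \cite{CIL}, then sandwiches $v_-^\e \leq u^\e \leq v_+^\e$ on $\R^n\times[0,T]$, which is precisely the bound \eqref{eq:cr} after absorbing the $(1+T)$ factor into $C_{m,T}$.

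The delicate point, and what I expect to be the main obstacle, is verifying that the comparison principle actually applies in this quadratic-growth regime: since $H(\cdot,y)$ is not globally Lipschitz in $p$, the standard comparison statements on unbounded domains require the sub- and super-solutions to have $p$-arguments confined to a common compact set. The resolution is to first record a uniform-in-$\e$ Lipschitz estimate on $u^\e$ via the weak Bernstein method, exactly as already employed in the proof of Lemma \ref{lemma:cell} to derive \eqref{eq:wd-osc}; combined with the explicit $C^1$ control on $\eta_m^\e$ inherited from the regularity of $\bar{u}_0$ and of the correctors $w_k$ via \eqref{eq:wk-Ck-C2a}, this places the gradients of $u^\e$ and of $v_\pm^\e$ in a common bounded subset of $\R^n$ depending only on $L$, $T$ and the structural constants. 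One may then truncate $H$ outside this set to a Lipschitz-in-$p$ Hamiltonian without affecting the PDE on the region actually explored, apply the Crandall--Ishii--Lions comparison to the truncated equation, and the remainder of the argument is bookkeeping of constants.
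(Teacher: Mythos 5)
Your proposal is correct and matches the paper's proof in substance: both take the corrector expansion $\eta_m^\e$ from Lemma \ref{lemma:wk-cor}, observe that $\eta_m^\e \pm C_{m,T}\e^m t$ are super/subsolutions of \eqref{eq:HJ} thanks to \eqref{eq:psime-Linf}, and conclude by comparison (your extra $+1$ in the factor $\e^m(1+t)$ is harmless). Your additional discussion --- uniform Lipschitz bounds on $u^\e$ via the weak Bernstein method, $C^1$ control on $\eta_m^\e$ from \eqref{eq:wk-Ck-C2a}, and truncation of $H$ to a Lipschitz-in-$p$ Hamiltonian so that Crandall--Ishii--Lions comparison applies in the quadratic-growth regime --- is a legitimate and more careful account of a step the paper simply invokes without comment.
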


\begin{proof} The proof follows from Lemma \ref{lemma:wk-cor} and the comparison principle for viscosity solutions. Let $\eta_m^\e$ be as in \eqref{eq:etame}. Due to \eqref{eq:psime-Linf}, we see that $\eta_m^\e + C_{m,T}\e^m t$ and $\eta_m^\e - C_{m,T}\e^m t$ are a viscosity supersolution and, respectively, a viscosity subsolution of \eqref{eq:HJ}. Thus, the comparison principle yields that 
\begin{equation}
|u^\e(x,t) - \eta_m^\e(x,t)| \leq T C_{m,T}\e^m, 
\end{equation}
uniformly for all $(x,t)\in\R^n\times[0,T]$, which finishes the proof. 
\end{proof}


\section{Generalization to Fully Nonlinear Hamiltonian}\label{section:nl}

In this section, we generalize Theorem \ref{theorem:cr} to the fully nonlinear, viscous Hamilton-Jacobi equation, \eqref{eq:HJ-nl}, whose gradient term is convex and grows quadratically at the infinity. Henceforth, we shall assume that the nonlinear functional $H$ satisfies the following conditions, for any $(M,p,y)\in\sS^n\times\R^n\times\R^n$. 
\begin{enumerate}[(i)]
\item $H$ is periodic in $y$:
\begin{equation}\label{eq:H-peri-nl}
H(M,p,y+k) = H(M,p,y),
\end{equation}
for any $k\in\Z^n$. 
\item $H$ is uniformly elliptic in $M$:
\begin{equation}\label{eq:H-ellip-nl}
\lambda\norm{N} \leq H(M,p,y) - H(M + N,p,y) \leq \Lambda\norm{N},
\end{equation}
for any $N\in\sS^n$ with $N\geq 0$.
\item $H$ has interior $C^{2,\bar\mu}$ estimates: for any $r>0$, any $(M_0,p_0,y_0)\in\sS^n\times\R^n\times\R^n$ with $a= H(M,p,y_0)$, and any $v_0 \in C(\p B_r(y_0))$, there exists a viscosity solution $v\in C(\bar{B}_r(y_0)) \cap C^2( B_r(y_0)) \cap C^{2,\bar\mu}(\bar{B}_{r/2}(y_0))$ of 
\begin{equation*}
\begin{cases}
H( D^2 v + M,p,y_0) = a &\text{in }B_r(y_0),\\
v = v_0 & \text{on }\p B_r(y_0),
\end{cases}
\end{equation*}
such that
\begin{equation*}
\norm{v}_{C^{2,\bar\mu}(\bar{B}_{r/2}(y_0))} \leq K\norm{v_0}_{L^\infty(\p B_r(y_0))}.
\end{equation*}
\item $H$ is convex in $p$:
\begin{equation}\label{eq:H-convex-nl}
H(M,tp+(1-t)q,y) \leq tH(M,p,y) + (1-t)H(M,q,y),
\end{equation}
for any $0\leq t\leq 1$ and any $q\in\R^n$.
\item $H$ has quadratic growth in $p$:
\begin{equation}\label{eq:H-quad-nl}
\alpha|p|^2 - \alpha' \leq H(0,p,y) \leq \beta|p|^2 + \beta'.
\end{equation}
\item $H\in C^\infty(\sS^n\times\R^n;C^{0,1}(\R^n))$ and 
\begin{equation}\label{eq:H-Ck-C01-nl}
\norm{D_M^k D_p^l H(M,p,\cdot)}_{C^{0,1}(\R^n)} \leq K\left( 1 + \norm{M}^{(1-k)_+} + |p|^{(2-l)_+} \right),
\end{equation}
for any pair $(k,l)$ of nonnegative integers.
\end{enumerate}

We shall impose the conditions \eqref{eq:g-convex} -- \eqref{eq:DHDg-0} to the initial data $g$, as we did in the preceding section, once the effective Hamiltonian $\bar{H}$ is determined. The effective Hamiltonian $\bar{H}$ is derived by solving the cell problem \eqref{eq:cell-nl}. Since the proof is analogous to that of Lemma \ref{lemma:cell}, we shall omit the details. Let us remark that although the Hamiltonian $H$ is now fully nonlinear in the Hessian variable $M$, we still have the $C^{2,\mu}$ regularity (for any $0<\mu<\bar\mu$) of periodic viscosity solutions to the cell problem \eqref{eq:cell-nl}, since $H$ has interior $C^{2,\bar\mu}$ estimates for fixed coefficients (assumption (iii)), and it satisfies Lipschitz regularity in $y$ (assumption (vi)). We refer to \cite[Theorem 8.1]{CC}, for details on this regularity theory. Besides, the reader who is unfamiliar with hypothesis (iii) can simply replace it by convexity in $M$. 

\begin{lemma}\label{lemma:cell-nl} For each $p\in\R^n$, there exists a unique real number $\gamma$, for which the following PDE,
\begin{equation}\label{eq:cell-nl}
H(D^2 w, Dw + p, y ) = \gamma \quad\text{in }\R^n,
\end{equation}
has a periodic solution $w\in C^{2,\mu}(\R^n)$, for some $0<\mu<1$ depending only on $n$, $\lambda$ and $\Lambda$. Moreover, $\gamma$ satisfies \eqref{eq:cell-Linf} and, furthermore, a periodic solution $w$ of \eqref{eq:cell-nl} is unique up to an additive constant and satisfies \eqref{eq:cell-C2a}. 
\end{lemma}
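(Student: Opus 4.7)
The plan is to mimic the proof of Lemma \ref{lemma:cell} essentially line by line, replacing the classical Schauder theory for uniformly elliptic linear PDEs with the Caffarelli--Cabr\'e regularity theory for fully nonlinear uniformly elliptic PDEs. Since $H$ is no longer linear in $D^2 w$, all $C^{2,\mu}$ estimates on approximate solutions must come from hypothesis (iii), rather than from freezing the equation into a linear one with bounded right-hand side. I would fix $p\in\R^n$ throughout and introduce the penalized problem
\[
H(D^2 w^\delta, Dw^\delta + p, y) + \delta w^\delta = 0 \quad\text{in }\R^n,
\]
noting that \eqref{eq:H-quad-nl} and \eqref{eq:H-ellip-nl} make the constants $-\delta^{-1}(\alpha|p|^2-\alpha')$ and $-\delta^{-1}(\beta|p|^2+\beta')$ respectively a super- and a sub-solution. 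The comparison principle for fully nonlinear uniformly elliptic equations (valid by \eqref{eq:H-ellip-nl}), together with Perron's method, then delivers a unique viscosity solution $w^\delta$ satisfying $-\beta|p|^2-\beta' \le \delta w^\delta \le -\alpha|p|^2+\alpha'$, and this uniqueness forces periodicity.

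The next step is to derive a uniform Lipschitz bound $\norm{Dw^\delta}_{L^\infty(\R^n)} \le C(1+|p|)$ via the weak Bernstein method of \cite{B}, whose hypotheses are fulfilled by \eqref{eq:H-ellip-nl}, \eqref{eq:H-convex-nl} and \eqref{eq:H-Ck-C01-nl}; by periodicity this also yields $\norm{w^\delta - w^\delta(0)}_{L^\infty(\R^n)} \le C(1+|p|)$. I would then freeze the $p$-slot by writing $F(M,y) := H(M, Dw^\delta(y)+p, y)$, so that $w^\delta$ is a bounded viscosity solution of $F(D^2 w^\delta, y) = -\delta w^\delta$ with $F$ globally Lipschitz in $y$ (by \eqref{eq:H-Ck-C01-nl} and the previous step), and with $F(\cdot, y_0)$ satisfying the interior $C^{2,\bar\mu}$ estimate postulated in hypothesis (iii). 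A standard bootstrap argument (first $C^{1,\alpha}$ by Krylov--Safonov/Caffarelli, then $C^{2,\mu}$ by \cite[Theorem 8.1]{CC}, combined with periodicity to globalize interior estimates) now gives $w^\delta - w^\delta(0) \in C^{2,\mu}(\R^n)$ with the bound \eqref{eq:cell-C2a} for some $0<\mu<\bar\mu$ depending only on $n,\lambda,\Lambda$, uniformly in $\delta$.

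Passage to the limit $\delta \to 0^+$ along a subsequence, using Arzel\`a--Ascoli and the stability of viscosity solutions, produces a periodic $w \in C^{2,\mu}(\R^n)$ and $\gamma \in \R$ solving \eqref{eq:cell-nl} and satisfying \eqref{eq:cell-Linf}, which establishes existence. Uniqueness of $\gamma$ is obtained exactly as in Lemma \ref{lemma:cell}: if $\gamma>\gamma'$ correspond to two solutions $w$ and $w'$, then $w'$ becomes a strict subsolution of the $\gamma$-equation, whereas periodicity forces $w'-w$ to attain a maximum, contradicting the comparison principle. Uniqueness of $w$ up to an additive constant follows by integrating along a segment in $(M,p)$-space: the difference $v=w-w'$ of two periodic solutions associated with the same $\gamma$ satisfies
\[
a^{ij}(y)D_{ij}v + b^i(y)D_i v = 0 \quad\text{in }\R^n,
\]
with $(a^{ij})$ uniformly elliptic by \eqref{eq:H-ellip-nl} and $b^i$ bounded by \eqref{eq:H-Ck-C01-nl}, whence the Liouville theorem / strong maximum principle on the torus forces $v$ to be constant. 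The main technical obstacle is the $C^{2,\mu}$ step for $w^\delta$, where the initial $L^\infty$-only control on $Dw^\delta$ prevents a direct application of \cite[Theorem 8.1]{CC}, and one must bootstrap through $C^{1,\alpha}$ to obtain H\"older regularity of the ``coefficient'' $F(M,\cdot)$ in $y$ before extracting a $C^{2,\mu}$ bound.
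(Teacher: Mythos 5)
Your proposal is correct and follows essentially the same route the paper intends for Lemma \ref{lemma:cell-nl}: the paper omits this proof, stating only that it is ``analogous'' to that of Lemma \ref{lemma:cell} with the linear Schauder theory replaced by assumption (iii) together with the Lipschitz-in-$y$ hypothesis and \cite[Theorem 8.1]{CC}, and your write-up fills in exactly that analogy (penalization, weak Bernstein gradient bound, a $C^{1,\alpha}\Rightarrow C^{2,\mu}$ bootstrap, Arzel\`a--Ascoli, and a linearization in $(M,p)$ for uniqueness). You also correctly identified the bootstrap subtlety that the $L^\infty$ gradient bound alone is insufficient for a direct application of \cite[Theorem 8.1]{CC}, and you silently corrected the $\delta$-versus-$\delta^{-1}$ slip in the super/subsolution constants carried over from the proof of Lemma \ref{lemma:cell}.
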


As in Section \ref{section:reg}, we shall denote by $\bar{H}$ the effective Hamiltonian of $H$. That is, $\bar{H}:\R^n\ra\R$ is a function defined in such a way that for each $p\in\R^n$, $\bar{H}(p)$ is the unique real number for which the following PDE,
\begin{equation}\label{eq:w-pde-nl}
H(D^2 w, Dw + p, y) =\bar{H}(p) \quad\text{in }\R^n,
\end{equation}
has a periodic viscosity solution in $C^{2,\mu}(\R^n)$. Moreover, we shall also denote by $w:\R^n\times\R^n\ra\R$ by the functional such that for each $p\in\R^n$, $w(p,\cdot) \in C^{2,\mu}(\R^n)$ is the unique periodic solution of \eqref{eq:w-pde-nl} that is normalized so as to satisfy \eqref{eq:w-0}. 

Following the same arguments in their proofs, one may prove that $\bar{H}$ and $w$ satisfy Lemma \ref{lemma:Hb} and Lemma \ref{lemma:w-Linf-C2a}, except for that $w\in C(\R^n;C^{2,\mu}(\R^n))$ for some fixed $0<\mu<1$, rather than any $0<\mu<1$. This is because the proofs of those lemmas do not rely on the linear structure of the diffusion coefficient, but more on its uniform ellipticity. A more important observation is the generalization of Lemma \ref{lemma:w-Ck-C2a}, which amounts to the regularity of $\bar{H}$ and $w$ in the slow variables.

\begin{lemma}\label{lemma:w-Ck-C2a-nl} $\bar{H}\in C^\infty(\R^n)$ and $w\in C^\infty(\R^n;C^{2,\mu}(\R^n))$, for any $0<\mu<\bar\mu$, such that \eqref{eq:w-Ck-C2a} holds, for any $k=0,1,2,\cdots$, any $L>0$ and any $p\in B_L$. 
\end{lemma}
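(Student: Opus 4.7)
The strategy mirrors the proofs of Lemma \ref{lemma:w-C1-C2a} and Lemma \ref{lemma:w-Ck-C2a}: linearize \eqref{eq:w-pde-nl} in $p$, derive a linear cell problem for the difference quotient, and iterate. The only structural difference is that linearizing in $p$ now produces a variable second-order coefficient (the Hessian derivative of $H$) in place of the fixed $A(y)$; so as a preliminary step I would establish a variable-coefficient analog of Lemma \ref{lemma:cell-lin}: for any periodic $A^\ast \in C^\mu(\R^n;\sS^n)$ that is uniformly elliptic and any periodic $B^\ast \in C^\mu(\R^n;\R^n)$, the cell problem
\begin{equation*}
-\tr(A^\ast(y) D^2 v) + B^\ast(y) \cdot (Dv + p) = \gamma \quad\text{in }\R^n
\end{equation*}
admits a unique constant $\gamma \in \R$ for which a periodic viscosity solution $v\in C^{2,\mu}(\R^n)$ exists, with $\norm{v - v(0)}_{C^{2,\mu}(\R^n)} \leq C|p|$. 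The proof copies that of Lemma \ref{lemma:cell}: use the penalized equation, Schauder $C^{2,\mu}$ estimates for linear equations with $C^\mu$ coefficients in place of the constant-coefficient estimates, and the strong maximum principle for uniqueness.

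With this at hand, fix $p\in B_L$ and write $w_h = w(p+he_k,\cdot)$, $\gamma_h = \bar{H}(p+he_k)$. Subtracting the PDEs for $w_h$ and $w_0$ and applying the mean value theorem to $H$, the difference quotient $W_h = h^{-1}(w_h - w_0)$ solves
\begin{equation*}
-\tr(A_h(y) D^2 W_h) + B_h(y) \cdot (DW_h + e_k) = \Gamma_h \quad\text{in }\R^n,
\end{equation*}
where
\begin{equation*}
A_h(y) = \int_0^1 D_M H\bigl(sD^2 w_h + (1-s)D^2 w_0,\, sDw_h + (1-s)Dw_0 + p + she_k,\, y\bigr)\,ds
\end{equation*}
and $B_h$ is the analogous integral of $D_p H$. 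The uniform $C^{2,\mu}$ control of $w_h,w_0$ from Lemma \ref{lemma:cell-nl}, together with the smoothness \eqref{eq:H-Ck-C01-nl} of $H$ and the Lipschitz $y$-dependence of its derivatives, gives uniform $C^\mu(\R^n)$ bounds for $A_h$ and $B_h$, while \eqref{eq:H-ellip-nl} yields uniform ellipticity of $A_h$. The preliminary cell problem then gives uniform bounds for $\Gamma_h$ and for $W_h$ in $C^{2,\mu}(\R^n)$, and uniqueness of the limiting problem forces $\Gamma_h \to D_{p_k}\bar{H}(p)$ and $W_h \to D_{p_k} w(p,\cdot)$ along the full sequence. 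Setting $A^\ast(p,y) = D_M H(D^2 w(p,y), Dw(p,y)+p, y)$ and $B^\ast(p,y) = D_p H(D^2 w(p,y), Dw(p,y)+p, y)$, the function $D_p w$ is the periodic solution of the linear cell problem with these coefficients, and further linearizations in $p$ yield linear cell problems of exactly the same type, with source terms expressible through quantities already controlled at previous steps. Consequently, the induction in the proof of Lemma \ref{lemma:w-Ck-C2a} transcribes essentially verbatim, delivering $\bar{H} \in C^\infty(\R^n)$, $w \in C^\infty(\R^n; C^{2,\mu}(\R^n))$ together with the estimate \eqref{eq:w-Ck-C2a}.

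The main obstacle, and the reason one cannot copy the linear-diffusion argument of Lemma \ref{lemma:w-Ck-C2a} word-for-word, is that at every inductive step the top-order coefficient $A^\ast(p,y)$ depends on the Hessian $D^2 w(p,y)$, which is only $C^\mu$ in $y$. Its $C^\mu$-in-$y$ regularity does survive because $D_M H$ is Lipschitz in $y$ (by \eqref{eq:H-Ck-C01-nl}) and $w$ is $C^{2,\mu}$ in $y$; and its $C^k$-in-$p$ regularity at step $k$ requires one additional order of $p$-regularity of $D^2 w$, which is supplied by the $(k-1)$-th step. The induction must therefore be run simultaneously on the $p$-regularity of $w$ and on the $p$-regularity of $(A^\ast, B^\ast)$, interlacing them carefully — this is the only place where the argument genuinely departs from the uniformly-elliptic linear-diffusion case.
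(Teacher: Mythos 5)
Your proposal follows essentially the same route as the paper: linearize \eqref{eq:w-pde-nl} in $(M,p)$, observe that the resulting difference-quotient equation has a variable (but uniformly elliptic, uniformly $C^\mu$) top-order coefficient coming from $D_M H$, invoke a variable-coefficient analogue of Lemma \ref{lemma:cell-lin}, and iterate. One small correction: with the equation written as $-\tr(A_h D^2 W_h)+B_h\cdot(DW_h+e_k)=\Gamma_h$ you need $A_h(y)=-\int_0^1 D_M H(\cdot)\,ds$ (and likewise $A^\ast(p,y)=-D_M H(D^2w,Dw+p,y)$), since \eqref{eq:H-ellip-nl} makes $-D_M H$, not $D_M H$, uniformly elliptic; as written your $A_h$ would have the wrong sign, contradicting the ellipticity you then invoke.
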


\begin{proof} Let us fix $0<\mu<\bar\mu$. It suffices to prove that $\bar{H}$ and $w$ verify Lemma \ref{lemma:w-C1-C2a}. Moreover, to see this fact, it is enough to show that the linearization argument in the proof of Lemma \ref{lemma:w-C1-C2a} also works out when the Hamiltonian $H$ depends nonlinearly on the Hessian variable $M$. 

Let $w_h$, $\gamma_h$, $W_h$ and $\Gamma_h$ be as in the proof of Lemma \ref{lemma:w-C1-C2a}. Then by linearizing the cell problem \eqref{eq:w-pde-nl} (in both of the Hessian and the gradient variables), we observe that $W_h$ solves
\begin{equation}\label{eq:Wh-pde-nl}
-\tr (A_h(y) D^2 W_h) + B_h(y) \cdot (DW_h + e_k) = \Gamma_h\quad\text{in }\R^n,
\end{equation}
where 
\begin{equation*}
A_h(y) = \int_0^1 - D_M H ( t D_y^2 w_h + (1-t) D_y w_0, D_y w_h + p, y) dt,
\end{equation*}
and 
\begin{equation*}
B_h(y) = \int_0^1 D_p H ( D_y w_0, t D_y w_h + (1-t) D_y w_0 +p + the,y) dt.
\end{equation*}
In comparison of \eqref{eq:Wh-pde-nl} with \eqref{eq:Wh-pde}, one may see that the only major difference here is that the diffusion coefficient, $A_h$, here is not fixed but depends on the parameter $h$.

Nevertheless, $A_h$ is uniformly elliptic not only in $y$ but also in $h$, due to the assumption \eqref{eq:H-ellip-nl}. This implies that Lemma \ref{lemma:cell-lin} is still applicable, and thus $W_h\in C^{2,\mu}(\R^n)$ and satisfies \eqref{eq:Wh-C2a} uniformly for $h$. 

Moreover, since $w$ satisfies \eqref{eq:w-Linf-C2a}, it follows from the regularity assumption \eqref{eq:H-Ck-C01-nl} of $H$ that $A_h\in C^\mu(\R^n)$ and 
\begin{equation}\label{eq:Ah-Ca}
\norm{A_h}_{C^\mu(\R^n)} \leq C,
\end{equation}
where $C>0$ depends only on $n$, $\lambda$ and $\Lambda$. For the same reason, we deduce that $B_h \in C^\mu(\R^n)$ and satisfies \eqref{eq:Bh-Ca}. Furthermore, since $w \in C(\R^n; C^{2,\mu}(\R^n))$, we have $A_h\ra A_0$ and $B_h\ra B_0$ in $C^{\mu'}(\R^n)$, for any $0<\mu'<\mu$, with 
\begin{equation*}
A_0(y) = - D_M H ( D_y w_0, D_y w_0 + p, y),
\end{equation*}
and 
\begin{equation*}
B_0(y) = D_p H ( D_y w_0, D_y w_0 + p ,y).
\end{equation*}

The rest of the proof follows similarly with that of of Lemma \ref{lemma:w-C1-C2a}. In particular, we obtain unique $W_0\in C^{2,\mu}(\R^n)$ and $\Gamma_0\in \R$ such that $W_0$ is the periodic solution  
\begin{equation}\label{eq:W0-pde-nl}
-\tr (A_0(y) D^2 W_0) + B_0(y) \cdot (DW_0 + e_k) = \Gamma_0\quad\text{in }\R^n,
\end{equation}
satisfying $W_0(0) = 0$. We leave out the details to the reader. 
\end{proof} 

Now we are in position to construct the higher order interior correctors of the homogenization problem \eqref{eq:HJ-nl}. We shall now let $g$ satisfy the structure conditions \eqref{eq:g-convex} -- \eqref{eq:DHDg-0}, with $\bar{H}$ being the effective Hamiltonian chosen to satisfy the cell problem \eqref{eq:w-pde-nl}. Next we shall denote by $\bar{u}_0$ the solution of \eqref{eq:eff-HJ}, with the updated data $\bar{H}$ and $g$, and write by $\bar{B}$ the function defined by \eqref{eq:Bb}. Once again, we have $\bar{u}_0\in C^\infty(\R^n\times[0,\infty))$ and $\bar{B}\in C^\infty(\R^n\times[0,\infty))$ with the estimates \eqref{eq:ub0-Ck} and \eqref{eq:Bb-Ck}. 

Let $w_0$, $\{W_k\}_{k=0}^\infty$ and $\{X_k\}_{k=0}^\infty$ denote those defined in \eqref{eq:w0}, \eqref{eq:Wk} and, respectively, \eqref{eq:Xk}, where the sequence $\{w_k\}_{k=1}^\infty$ of higher order interior correctors will be given as below. 

Now that the Hamiltonian $H$ is nonlinear in $M$, we need to apply the Taylor expansion not only in the variable $p$ but also in the variable $M$, in order to obtain the PDEs (or, more precisely, the cell problems) for the higher order interior correctors. In this direction, we consider the coefficient $B_{k,l}$ defined by
\begin{equation}\label{eq:Bkl-nl}
B_{k,l} (x,t,y) = D_M^kD_p^l H(X_0(x,t,y),W_0(x,t,y),y),
\end{equation}
for $k,l=0,1,2,\cdots$. In particular, we shall write 
\begin{equation}\label{eq:A-nl}
A(x,t,y) = -B_{1,0}(x,t,y) = - D_M H(X_0(x,t,y),W_0(x,t,y),y),
\end{equation}
and
\begin{equation}\label{eq:B-nl}
B(x,t,y) = B_{0,1}(x,t,y) = D_p H(X_0(x,t,y),W_0(x,t,y),y).
\end{equation}
Note that $A$ is uniformly elliptic with the same ellipticity bounds as those of $H$. 

\begin{lemma}\label{lemma:wk-nl} Suppose that $H$ and $g$ satisfy \eqref{eq:H-peri-nl} -- \eqref{eq:H-Ck-C01-nl} and, respectively, \eqref{eq:g-convex} -- \eqref{eq:DHDg-0}. Then there exists a sequence $\{w_k\}_{k=1}^\infty$ satisfying the following.
\begin{enumerate}[(i)]
\item $w_k \in C^\infty(\R^n\times[0,\infty); C^{2,\mu}(\R^n))$, for any $0<\mu<\bar\mu$, and satisfies the estimate \eqref{eq:wk-Ck-C2a}, for any $i,j=0,1,2,\cdots$, any $T>0$ and any $(x,t)\in\R^n\times[0,T]$. 
\item $w_k$ is normalized so as to satisfy \eqref{eq:wk-0}. 
\item For each $(x,t)\in\R^n\times(0,\infty)$, $w_k(x,t,\cdot)$ is a periodic solution of
\begin{equation}\label{eq:w1-pde-nl}
\p_t w_0(x,t,y) + H( D_y^2 w_1, D_y w_1 + D_x w_0(x,t,y), y ) = 0 \quad\text{in }\R^n,
\end{equation}
for $k=1$, and 
\begin{equation}\label{eq:wk-pde-nl}
\begin{split}
&\p_t w_{k-1}(x,t,y) - \tr(A(x,t,y) D_y^2 w_k) \\
&+ B(x,t,y) \cdot (D_y w_k + D_x w_{k-1}(x,t,y)) + \Phi_{k-1}(x,t,y) = 0 \quad\text{in }\R^n,  
\end{split}
\end{equation}
for $k\geq 2$, where 
\begin{equation}\label{eq:Phik-nl}
\begin{split}
\Phi_{k-1} (x,t,y) &= -2\tr (A(x,t,y) (D_xD_y w_{k-1} (x,t,y) + D_x^2 w_{k-2} (x,t,y))) \\
&\quad + \sum_{l=2}^{k-1} \frac{1}{l!}  \sum_{\substack{ i_1 + \cdots + i_l = k-1 \\ i_1,\cdots,i_1\geq 1}} \sum_{r=0}^l B_{r,l-r} (x,t,y) ( X_{i_1}(x,t,y),\cdots, X_{i_r} (x,t,y), \\
&\quad\quad\quad\quad\quad\quad\quad\quad\quad \quad\quad\quad\quad\quad\quad\quad W_{i_{r+1}}(x,t,y),\cdots, W_{i_l}(x,t,y))
\end{split}
\end{equation}
with the last summation term understood as zero when $k=2$. 
\end{enumerate}
\end{lemma}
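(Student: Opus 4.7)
The plan is to run the induction construction from the proof of Lemma~\ref{lemma:wk} essentially verbatim, with two structural changes that absorb the new nonlinearity in the Hessian variable. First, the frozen diffusion $A(y)$ is replaced everywhere by the parameter-dependent, uniformly elliptic matrix $A(x,t,y)=-D_M H(X_0(x,t,y),W_0(x,t,y),y)$ from~\eqref{eq:A-nl}; by~\eqref{eq:H-ellip-nl} it obeys $\lambda I\le A(x,t,\cdot)\le \Lambda I$, and by Lemma~\ref{lemma:w-Ck-C2a-nl} together with the smoothness of $D_M H$ from~\eqref{eq:H-Ck-C01-nl} it lies in $C^\infty(\R^n\times[0,\infty);C^\mu(\R^n))$ for any $0<\mu<\bar\mu$. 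Second, the Taylor source will carry derivatives in both $M$ and $p$: at multiplicity $l$, the splitting $(r,l-r)$ of the total differentiation order into $r$ Hessian derivatives (acting on $X_{i_j}$) and $l-r$ gradient derivatives (acting on $W_{i_j}$) produces the inner sum over $r$ in~\eqref{eq:Phik-nl}.

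For the base case $k=1$, I would set $\phi_1(x,t,y)=w(D_x\bar{u}_0(x,t),y)$ as in~\eqref{eq:phi1}. Because $w(p,\cdot)$ solves~\eqref{eq:w-pde-nl} with right-hand side $\bar{H}(p)$ and $\bar{u}_0$ satisfies~\eqref{eq:eff-HJ}, substituting $p=D_x\bar{u}_0$ reproduces~\eqref{eq:w1-pde-nl} after noting that $\p_t\bar{u}_0=-\bar{H}(D_x\bar{u}_0)$. The normalization~\eqref{eq:wk-0} and the estimate~\eqref{eq:wk-Ck-C2a} then follow from~\eqref{eq:w-0} and Lemma~\ref{lemma:w-Ck-C2a-nl} composed with the smooth field $D_x\bar{u}_0$.

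For the inductive step, assuming that $w_0,\ldots,w_{k-1}$, $\bar{u}_{k-1}$ and $\phi_k$ have been constructed with the claimed regularity and that they satisfy~\eqref{eq:wk-pde-nl}, I would form the partial sum $\tilde w_k=\phi_k+\chi\cdot D_x\bar{u}_{k-1}$ and the candidate source
\begin{equation*}
f_k=\p_t\tilde w_k+B\cdot D_x\tilde w_k-2\tr\bigl(A\,(D_xD_y\tilde w_k+D_x^2 w_{k-1})\bigr)+\cN_k,
\end{equation*}
where $\cN_k$ is the $k$-th order mixed multilinear Taylor term patterned on~\eqref{eq:Phik-nl}. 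Smoothness of $f_k$ in $(x,t)$ with values in $C^\mu(\R^n)$ follows from~\eqref{eq:H-Ck-C01-nl}, \eqref{eq:chi-Ck-C2a} and the induction hypothesis. Applying Lemma~\ref{lemma:cell-lin} pointwise in $(x,t)$ — legitimate since $A(x,t,\cdot)$ is uniformly elliptic and H\"older with bounds uniform in $(x,t)$ on compact sets — produces a unique constant $\bar f_k(x,t)$ and a unique periodic solution $\phi_{k+1}(x,t,\cdot)$ of
\begin{equation*}
-\tr\bigl(A(x,t,y)D_y^2\phi_{k+1}\bigr)+B(x,t,y)\cdot D_y\phi_{k+1}+f_k=\bar f_k\quad\text{in }\R^n,
\end{equation*}
normalized by $\phi_{k+1}(x,t,0)=0$. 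Smooth $(x,t)$-dependence of $\bar f_k$ and $\phi_{k+1}$ then comes from the same difference-quotient/linearization argument used in Lemma~\ref{lemma:w-Ck-C2a-nl}. The effective data $\bar{u}_k$ is defined by solving~\eqref{eq:ubk-pde} along the smooth, nowhere-vanishing characteristic field $\bar B$ ensured by~\eqref{eq:DHDg-0}, and $w_k=\tilde w_k+\bar{u}_k$ is the sought $k$-th corrector. The verification that $\tilde w_{k+1}=\phi_{k+1}+\chi\cdot D_x\bar{u}_k$ propagates the induction, i.e.\ that $w_k$ satisfies~\eqref{eq:wk-pde-nl}, is the same telescoping calculation as at the end of the proof of Lemma~\ref{lemma:wk}, now using~\eqref{eq:chi-pde}, \eqref{eq:ubk-pde} and the cell equation for $\phi_{k+1}$.

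The main obstacle I anticipate is not the analysis but the combinatorics: one must verify that the inner sum over $r$ in~\eqref{eq:Phik-nl}, with $B_{r,l-r}$ contracted against $r$ of the $X_{i_j}$ and $l-r$ of the $W_{i_j}$, is precisely the coefficient of $\e^{k-1}$ in the Taylor expansion of $H(M,p,y)$ around $(X_0,W_0)$ evaluated at $\bigl(\sum_{j\ge 1}\e^j X_j,\sum_{j\ge 1}\e^j W_j\bigr)$. Once this bookkeeping is correctly set up, the analog of Lemma~\ref{lemma:wk-cor} again yields a source of size $O(\e^m)$, and the main convergence rate result will follow by the comparison principle for viscous Hamilton--Jacobi equations.
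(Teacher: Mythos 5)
Your proposal matches the paper's own (very terse) argument: the paper simply says to repeat the induction from Lemma~\ref{lemma:wk} with $A(y)$ replaced by the slow-variable-dependent, uniformly elliptic matrix $A(x,t,y)=-D_M H(X_0,W_0,y)$ and with the Taylor source $f_k$ carrying mixed $M$- and $p$-derivatives, which is exactly the pair of modifications you identify and implement. Your base case via $\phi_1=w(D_x\bar u_0,\cdot)$ from the fully nonlinear cell problem, the appeal to the variable-coefficient version of Lemma~\ref{lemma:cell-lin} (justified by uniform ellipticity and H\"older control of $A(x,t,\cdot)$), the transport solve for $\bar u_k$ along the nonvanishing field $\bar B$, and the telescoping verification all coincide with the paper's intended construction, and your caveat about checking the Taylor combinatorics is precisely the point the paper leaves to the reader.
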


\begin{remark}\label{remark:wk-nl} As mentioned in Remark \ref{remark:wk}, $\Phi_k$ now takes care of the nonlinear effect produced by $H$ in both $M$ and $p$ variables. Moreover, the summation term in the definition \eqref{eq:Phik-nl} of $\Phi_k$ is specifically constructed to have \eqref{eq:HDetame-nl}, by which we will eventually derive the higher order convergence rates for the homogenization problem \eqref{eq:HJ-nl}.
\end{remark}

\begin{proof}[Proof of Lemma \ref{lemma:wk-nl}] The proof follows essentially the same induction argument presented in that of Lemma \ref{lemma:wk}. To avoid any repeating argument, we shall only point out the major difference from the proof of Lemma \ref{lemma:wk}, and ask the reader to fill in the details.  

Here we define $\phi_1$ by \eqref{eq:phi1} with $w$ being the (normalized) periodic solution of \eqref{eq:w-pde-nl} (instead of \eqref{eq:w-pde}), and accordingly set $w_1$ by \eqref{eq:w1} with some $\bar{u}_1$ to be determined. Then we observe that $W_0$ and $X_0$ verify the expressions \eqref{eq:W0} and, respectively, \eqref{eq:X0}. Moreover, we verify that $B_{l,k-l}$ satisfy the estimate \eqref{eq:Bk-Ck-Ca}, for any $l=0,1,\cdots,k$ and any $k=1,2,\cdots$.

The function $f_k$, which takes cares of all the nonlinear effect caused in the $k$-th step of approximation, is now replaced by  
\begin{equation}\label{eq:fk-nl}
\begin{split}
f_k (x,t,y) &= \p_t \tilde{w}_k (x,t,y) + B(x,t,y)\cdot D_x \tilde{w}_k(x,t,y)  \\
&\quad -2\tr ( A(x,t,y) (D_xD_y \tilde{w}_k (x,t,y) + D_x^2 w_{k-1} (x,t,y))) \\
&\quad + \sum_{l=2}^{k-1} \frac{1}{l!}  \sum_{\substack{ i_1 + \cdots + i_l = k-1 \\ i_1,\cdots,i_1\geq 1}} \sum_{r=0}^l B_{r,l-r} (x,t,y) ( X_{i_1}(x,t,y),\cdots, X_{i_r} (x,t,y), \\
&\quad\quad\quad\quad\quad\quad\quad\quad\quad \quad\quad\quad\quad\quad\quad\quad W_{i_{r+1}}(x,t,y),\cdots, W_{i_l}(x,t,y)),
\end{split}
\end{equation}
Due to the periodicity of $f_k$ in $y$, we consider the following cell problem: there exists a unique $\bar{f}_k:\R^n\times(0,\infty)\ra\R^n$ such that for each $(x,t)\in\R^n\times(0,\infty)$, the PDE,
\begin{equation}\label{eq:phik-pde-nl}
-\tr (A(x,t,y) D_y^2 \phi_{k+1}) + B(x,t,y) \cdot D_y \phi_{k+1} + f_k(x,t,y) = \bar{f}_k(x,t) \quad\text{in }\R^n,
\end{equation}
has a periodic viscosity solution. The rest of the proof can be derived by following that of Lemma \ref{lemma:wk}, whence we omit the details. 
\end{proof}

The next lemma is the corresponding version of Lemma \ref{lemma:wk-cor} for fully nonlinear Hamiltonian $H$.

\begin{lemma}\label{lemma:wk-cor-nl}
Let $\{w_k\}_{k=1}^\infty$ be chosen as in Lemma \ref{lemma:wk-nl}. Then for each integer $m\geq 1$ and each $0<\e\leq\frac{1}{2}$, the function $\eta_m^\e$, defined by \eqref{eq:etame}, is a viscosity solution of 
\begin{equation}\label{eq:etame-pde-nl}
\begin{dcases}
\p_t \eta_m^\e + H\left( \e D^2 \eta_m^\e, D \eta_m^\e, \frac{x}{\e}\right) = \psi_m^\e\left(x,t,\frac{x}{\e}\right) & \text{in }\R^n\times(0,\infty),\\
\eta_m^\e = g & \text{on }\R^n\times\{t=0\},
\end{dcases}
\end{equation}
where $\psi_m^\e \in C(\R^n\times[0,\infty);L^\infty(\R^n))$ satisfies \eqref{eq:psime-Linf}, for any $T>0$ and all $(x,t)\in \R^n\times[0,T]$.
\end{lemma}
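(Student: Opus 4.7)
The plan is to follow the argument of Lemma \ref{lemma:wk-cor} almost verbatim, the only substantive change being that we must Taylor-expand $H$ jointly in both the Hessian slot and the gradient slot at the base point $(X_0(x,t,y), W_0(x,t,y))$, rather than in $p$ alone. First I would compute, from the definition \eqref{eq:etame} of $\eta_m^\e$ together with \eqref{eq:Wk} and \eqref{eq:Xk} and the convention that $w_{m+1}, w_{m+2} \equiv 0$, the identities
\begin{equation*}
D\eta_m^\e(x,t) = \sum_{k=0}^m \e^k W_k\!\left(x,t,\tfrac{x}{\e}\right), \qquad \e D^2\eta_m^\e(x,t) = \sum_{k=0}^{m+1} \e^k X_k\!\left(x,t,\tfrac{x}{\e}\right).
\end{equation*}
Thus both arguments of $H$ differ from the base point $(X_0,W_0)$ by an $O(\e)$ quantity whose expansion in powers of $\e$ has explicit multi-index structure.

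Next I would apply the joint Taylor expansion of $H(M,p,y)$ in $(M,p)$ at $(X_0,W_0)$ up to order $m-1$, and group terms by $\e$-power. The zeroth-order term is $H(X_0,W_0,y)$; the linear terms produce $-\tr(A(x,t,y)\cdot X_k)$ and $B(x,t,y)\cdot W_k$ by \eqref{eq:A-nl}--\eqref{eq:B-nl}; the higher-order mixed derivatives $B_{r,l-r}$ of \eqref{eq:Bkl-nl} produce exactly the polynomial expressions in $X_{i_1},\dots,X_{i_r},W_{i_{r+1}},\dots,W_{i_l}$ appearing in $\Phi_{k-1}$ in \eqref{eq:Phik-nl}. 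Collecting powers, one obtains the identity
\begin{equation}\label{eq:HDetame-nl}
H\!\left(\e D^2\eta_m^\e, D\eta_m^\e, \tfrac{x}{\e}\right) = \sum_{k=0}^{m-1} \e^k \Psi_k\!\left(x,t,\tfrac{x}{\e}\right) + E_m^\e\!\left(x,t,\tfrac{x}{\e}\right),
\end{equation}
where each $\Psi_k$ is designed, thanks to the specific choice of $\Phi_{k-1}$ in Lemma \ref{lemma:wk-nl}(iii), to satisfy $\p_t w_k + \Psi_k \equiv 0$ pointwise. Adding $\p_t \eta_m^\e = \sum_{k=0}^m \e^k \p_t w_k(x,t,x/\e)$ then gives cancellation for $0 \le k \le m-1$, leaving
\begin{equation*}
\psi_m^\e(x,t,y) = \e^m \p_t w_m(x,t,y) + E_m^\e(x,t,y).
\end{equation*}

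Finally, I would estimate the two contributions separately. For $\e^m\p_t w_m$ the bound $C_{m,T}\e^m$ is immediate from Lemma \ref{lemma:wk-nl}(i). The remainder $E_m^\e$ consists of (a) the $(m-1)$-th order Taylor remainder of $H$ at $(X_0,W_0)$ evaluated at the $O(\e)$ increments, together with (b) the two straggler monomials $\e^m X_m$ and $\e^{m+1} X_{m+1}$ arising from the Hessian sum, and (c) the multi-index combinatorial leftovers analogous to \eqref{eq:Eme} where $m \le i_1 + \cdots + i_l \le lm$. Using the polynomial growth \eqref{eq:H-Ck-C01-nl} of $D_M^k D_p^l H$ together with the uniform $C^{2,\mu}$ bounds \eqref{eq:wk-Ck-C2a} on the $w_k$ (which guarantee that $X_0$ and $W_0$ stay in a bounded set on $\R^n\times[0,T]$, and that each $X_{i_j}, W_{i_j}$ is bounded), every term is of order at least $\e^m$, yielding \eqref{eq:psime-Linf}.

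The main obstacle is the combinatorial bookkeeping of the joint $(M,p)$ Taylor expansion: one must check that collecting terms of total $\e$-power $k$ produces exactly the sum over multi-indices $(i_1,\dots,i_l)$ and splits $(r, l-r)$ appearing in $\Phi_{k-1}$ of \eqref{eq:Phik-nl}, so that the choice made in Lemma \ref{lemma:wk-nl} fits the expansion on the nose. Aside from this accounting, the analytic content is identical to the proof of Lemma \ref{lemma:wk-cor}; in particular the initial condition in \eqref{eq:etame-pde-nl} holds because $\bar{u}_0|_{t=0} = g$ and $w_k(x,0,0) = 0$, and $\psi_m^\e \in C(\R^n\times[0,\infty);L^\infty(\R^n))$ follows from the smoothness assertions of Lemma \ref{lemma:wk-nl}(i) and the regularity \eqref{eq:H-Ck-C01-nl}.
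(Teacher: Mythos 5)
Your proposal is correct and takes essentially the same approach as the paper's proof: a joint Taylor expansion of $H$ in $(M,p)$ about $(X_0,W_0)$, cancellation of orders $0$ through $m-1$ via the identity $\p_t w_k + \Psi_k = 0$, and an $O(\e^m)$ estimate of $\psi_m^\e = \e^m\p_t w_m + E_m^\e$ using \eqref{eq:H-Ck-C01-nl} and \eqref{eq:wk-Ck-C2a}. The only cosmetic difference is that the paper absorbs the $\e^m X_m$ and $\e^{m+1}X_{m+1}$ contributions into the joint Taylor remainder $R_{m-1}$ evaluated at the full increment $\bigl(\sum_{k=1}^{m+1}\e^k X_k,\ \sum_{k=1}^m\e^k W_k\bigr)$, rather than treating them as separate stragglers outside the expansion as was natural in the linear case of Lemma~\ref{lemma:wk-cor}; the resulting $O(\e^m)$ bound is the same either way.
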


\begin{proof} As in the proof of Lemma \ref{lemma:wk-nl}, we shall mention the key points that need to be modified from the proof of Lemma \ref{lemma:wk-cor}, in order to take care of the nonlinear effect in the Hessian variable of $H$. Let us begin by fixing $m\geq 1$ and $0<\e\leq\frac{1}{2}$, and replacing $w_{m+1}$ and $w_{m+2}$ by the identically zero functions, again for the notational convenience.

We shall define $\Psi_k$ by 
\begin{equation}\label{eq:Psi0-nl}
\Psi_0(x,t,y) = H( X_0(x,t,y), W_0(x,t,y), y),
\end{equation}
if $k=0$, and by
\begin{equation}\label{eq:Psik-nl}
\begin{split}
\Psi_k(x,t,y) &= \sum_{l=1}^{k-1} \frac{1}{l!}  \sum_{\substack{ i_1 + \cdots + i_l = k-1 \\ i_1,\cdots,i_1\geq 1}} \sum_{r=0}^l B_{r,l-r} (x,t,y) ( X_{i_1}(x,t,y),\cdots, X_{i_r} (x,t,y), \\
&\quad\quad\quad\quad\quad\quad\quad\quad\quad \quad\quad\quad\quad\quad\quad W_{i_{r+1}}(x,t,y),\cdots, W_{i_l}(x,t,y))
\end{split}
\end{equation}
if $1\leq k\leq m-1$. Then it follows from the PDEs \eqref{eq:w1-pde-nl} and \eqref{eq:wk-pde-nl} that \eqref{eq:wk-pde-re} holds for $0\leq k\leq m-1$.

Applying the Taylor expansion of $H$ in $(M,p)$ at $(X_0,W_0)$ up to $(m-1)$-th  order, and after some calculations similar to those in \eqref{eq:Tm}, we obtain that
\begin{equation}\label{eq:HDetame-nl}
H \left( \e D^2 \eta_m^\e (x,t), D\eta_m^\e(x,t),\frac{x}{\e}\right) = \sum_{k=0}^{m-1} \e^k \Psi_k\left(x,t,\frac{x}{\e}\right) + E_m^\e \left(x,t,\frac{x}{\e}\right),
\end{equation}
where $E_m^\e$ is defined so as to satisfy
\begin{equation}\label{eq:Eme-nl}
\begin{split}
&E_m^\e (x,t,y) - R_{m-1}\left((X_0(x,t,y),W_0(x,t,y)),\left(\sum_{k=1}^{m+1} \e^k X_k(x,t,y), \sum_{k=1}^m\e^k W_k(x,t,y)\right)\right)(y)\\
&= \sum_{k=2}^m \sum_{\substack{m\leq i_1+\cdots+i_k\leq km \\ 1\leq i_1,\cdots,i_k\leq m}} \frac{\e^{i_1+\cdots+i_k}}{k!} \sum_{l=0}^k B_{l,k-l}(x,t,y)(X_{i_1}(x,t,y)\cdots,X_{i_l}(x,t,y), \\
&\quad\quad\quad\quad\quad\quad\quad\quad\quad \quad\quad\quad\quad\quad\quad\quad\quad\quad W_{i_{l+1}}(x,t,y),\cdots,W_{i_k}(x,t,y)),
\end{split}
\end{equation}
where $R_{m-1}((M_0,p_0),(M,p))$ denotes the $(m-1)$-th order remainder term of $H$ in $(M,p)$ at $(M_0,p_0)$.

We deduce from \eqref{eq:HDetame-nl} that $\eta_m^\e$ solves \eqref{eq:etame-pde-nl} with $\psi_m^\e$ defined by \eqref{eq:psime}. The rest of the proof follows similarly to that of Lemma \ref{lemma:wk-cor}. In particular, we have \eqref{eq:Eme-Linf}, since $B_{l,k-l}$ and $w_k$ satisfy the estimate \eqref{eq:Bk-Ck-Ca} and, respectively, \eqref{eq:wk-Ck-C2a}. We leave out the details to the reader.  
\end{proof}

Finally, we generalize Theorem \ref{theorem:cr} to the regime of fully nonlinear, viscous Hamilton-Jacobi equation, as stated below.

\begin{theorem}\label{theorem:cr-nl} Suppose that the Hamiltonian $H$ and the initial data $g$ satisfy \eqref{eq:H-peri-nl} -- \eqref{eq:H-Ck-C01-nl} and, respectively, \eqref{eq:g-convex} -- \eqref{eq:DHDg-0}. Under these circumstances, let $\{u^\e\}_{\e>0}$ be the sequence of the viscosity solutions of \eqref{eq:HJ-nl}. Then with the viscosity solution $\bar{u}_0$ of \eqref{eq:eff-HJ} and the sequence $\{w_k\}_{k=1}^\infty$ of $k$-th order interior correctors chosen in Lemma \ref{lemma:wk-nl}, we have, for each integer $m\geq 1$, any $0<\e\leq\frac{1}{2}$ and any $T>0$, 
\begin{equation}\label{eq:cr-nl}
\left| u^\e(x,t) - \bar{u}_0(x,t) - \sum_{k=1}^m \e^k w_k \left(x,t,\frac{x}{\e}\right) \right| \leq C_{m,T}\e^m,
\end{equation}
uniformly for all $(x,t)\in\R^n\times [0,T]$, where $C_{m,T}>0$ depends only on $n$, $\lambda$, $\Lambda$, $\alpha$, $\alpha'$, $\beta$, $\beta'$, $K$, $L$, $\mu$, $\bar\mu$, $m$ and $T$. 
\end{theorem}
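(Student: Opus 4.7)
The plan is to mirror the proof of Theorem \ref{theorem:cr} verbatim, substituting Lemma \ref{lemma:wk-cor-nl} for Lemma \ref{lemma:wk-cor} and invoking the appropriate comparison principle for the fully nonlinear viscous Hamilton-Jacobi equation \eqref{eq:HJ-nl}. The key point is that all of the hard work has already been done: Lemma \ref{lemma:wk-nl} delivers the correctors $\{w_k\}_{k=1}^\infty$, and Lemma \ref{lemma:wk-cor-nl} then shows that the truncated expansion $\eta_m^\e$ satisfies \eqref{eq:etame-pde-nl} with a right-hand side whose $L^\infty$ norm is $O(\e^m)$.

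First, I would fix $m\geq 1$, $0<\e\leq \frac{1}{2}$ and $T>0$, and let $\eta_m^\e$ be as in \eqref{eq:etame}. By Lemma \ref{lemma:wk-cor-nl}, the function $\eta_m^\e$ is a viscosity solution of \eqref{eq:etame-pde-nl}, and its forcing term $\psi_m^\e$ satisfies the bound \eqref{eq:psime-Linf}, namely
\begin{equation*}
\left| \psi_m^\e\left(x,t,\tfrac{x}{\e}\right) \right| \leq C_{m,T}\e^m
\quad \text{for all } (x,t)\in\R^n\times[0,T].
\end{equation*}
Moreover, Lemma \ref{lemma:wk-nl}(ii) guarantees the initial condition $\eta_m^\e(\cdot,0) = g$ on $\R^n$, since $w_k(x,0,0)=0$ and $\bar{u}_0(\cdot,0)=g$.

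Second, I would construct barriers. Define
\begin{equation*}
\eta_m^{\e,+}(x,t) = \eta_m^\e(x,t) + C_{m,T}\e^m\, t,
\qquad
\eta_m^{\e,-}(x,t) = \eta_m^\e(x,t) - C_{m,T}\e^m\, t.
\end{equation*}
Since $H$ is independent of the unknown itself (it depends only on $M,p,y$), adding $C_{m,T}\e^m\, t$ contributes exactly $C_{m,T}\e^m$ to the time derivative while leaving the Hessian and gradient of $\eta_m^\e$ unchanged. Thus $\eta_m^{\e,+}$ is a viscosity supersolution and $\eta_m^{\e,-}$ a viscosity subsolution of \eqref{eq:HJ-nl}, and both agree with $g$ at $t=0$.

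Third, I would invoke the comparison principle for viscosity solutions of \eqref{eq:HJ-nl}. Under the structural hypotheses \eqref{eq:H-peri-nl}--\eqref{eq:H-Ck-C01-nl} --- in particular the uniform ellipticity \eqref{eq:H-ellip-nl} in the Hessian variable, convexity \eqref{eq:H-convex-nl} and quadratic growth \eqref{eq:H-quad-nl} in the gradient variable, together with the Lipschitz regularity in $y$ --- the standard comparison theorem for viscous Hamilton-Jacobi equations (as in \cite{CIL}) applies and yields
\begin{equation*}
\eta_m^{\e,-}(x,t) \leq u^\e(x,t) \leq \eta_m^{\e,+}(x,t)
\quad\text{for all } (x,t)\in\R^n\times[0,T].
\end{equation*}
Rearranging gives $|u^\e - \eta_m^\e| \leq C_{m,T}\e^m\, t \leq T\, C_{m,T}\e^m$ on $\R^n\times[0,T]$, which is precisely \eqref{eq:cr-nl} (after absorbing $T$ into the constant).

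The only potentially delicate step is the comparison principle in $\R^n$ with at most linearly-growing data and quadratic growth in $p$; this is why one must work inside the class of viscosity solutions with controlled growth at infinity. Since both $u^\e$ and $\eta_m^\e$ are uniformly bounded on $\R^n\times[0,T]$ for each $T$ (the former by the structure conditions together with the Lipschitz bound \eqref{eq:g-Ck} on $g$, the latter by the uniform estimates \eqref{eq:ub0-Ck} and \eqref{eq:wk-Ck-C2a}), this growth restriction is not an issue and the comparison principle applies directly. Hence the only real work is bookkeeping, and the theorem follows at once.
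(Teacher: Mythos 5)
Your proposal matches the paper's proof exactly: both fix $\eta_m^\e$ from Lemma \ref{lemma:wk-cor-nl}, use the $O(\e^m)$ bound on $\psi_m^\e$ to show that $\eta_m^\e \pm C_{m,T}\e^m t$ are super/subsolutions of \eqref{eq:HJ-nl}, and conclude by the comparison principle. Your additional remark justifying why the comparison principle applies (uniform boundedness on $\R^n\times[0,T]$) is a helpful elaboration of a point the paper takes for granted, but it is the same argument.
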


\begin{proof} The proof follows the same comparison argument as that in the proof of Theorem \ref{theorem:cr}. Let $\eta_m^\e$ be as in Lemma \ref{lemma:wk-cor-nl}. According to Lemma \ref{lemma:wk-cor-nl}, $\eta_m^\e + C_{m,T}\e^m t$ and $\eta_m^\e - C_{m,T}\e^m t$ are a viscosity supersolution and, respectively, a viscosity subsolution of \eqref{eq:HJ-nl}, for some constant $C_{m,T}>0$ depending only on $n$, $\lambda$, $\Lambda$, $\alpha$, $\alpha'$, $\beta$, $\beta'$, $K$, $L$, $\mu$, $\bar\mu$, $m$ and $T$. Therefore, the comparison principle yields that 
\begin{equation}
|u^\e(x,t) - \eta_m^\e(x,t)| \leq T C_{m,T}\e^m, 
\end{equation}
uniformly for all $(x,t)\in\R^n\times[0,T]$. This completes the proof. 
\end{proof}

\end{document}